\documentclass[12pt,leqno]{article}
\usepackage[utf8]{inputenc}
\usepackage[T1]{fontenc}

\usepackage{amsmath,amsthm,amsfonts,amssymb}
\usepackage{mathrsfs}
\usepackage{bbm}
\usepackage{stmaryrd}

\usepackage{graphicx}
\usepackage{subfig}
\usepackage{enumitem}

\usepackage{xcolor}
\usepackage[colorlinks=true,
            linkcolor=blue,
            urlcolor=blue,
            citecolor=blue]{hyperref}

\newtheorem{thm}{Theorem}[section]

\newtheorem{cor}[thm]{Corollary}

\newtheorem{prp}[thm]{Proposition}

\theoremstyle{definition}
\newtheorem{defn}[thm]{Definition}

\theoremstyle{remark}
\newtheorem{rem}[thm]{Remark}

\newcommand{\Prob}{\mathbb{P}}
\newcommand{\E}{\mathbb{E}}
\newcommand{\R}{\mathbb{R}}
\newcommand{\ov}[1]{\overline{#1}}

\newcommand{\keywords}[1]{\par\noindent\textbf{Keywords:} #1}

\newcommand{\ee}{\mathrm{e}}

\def\red{\textcolor[rgb]{0.98,0.00,0.00}}

\definecolor{wco}{rgb}{0.5,0.2,0.3}

\title{Only Segmented Heavy Tails Can Produce a Light-Tailed Minimum}

\author{
  Sergey Foss%
    \footnote{School of MACS, Heriot-Watt University, EH14 4AS Edinburgh and Sobolev Institute of Mathematics. E-mail: s.foss@hw.ac.uk},
  Michael Scheutzow%
    \footnote{Technische Universit\"at Berlin, MA 7-5, Strasse des 17~Juni 136, 10623 Berlin. E-mail: ms@math.tu-berlin.de}
  and Anton Tarasenko%
    \footnote{Sobolev Institute of Mathematics. E-mail: tarasenko@math.nsc.ru}
}

\date{}

\begin{document}
\allowdisplaybreaks
\maketitle

\begin{abstract}
A random variable $\xi$ has a {\it light-tailed} distribution (for short: is light-tailed) if it possesses a finite exponential moment, $\E \exp (\lambda \xi) <\infty$ for some $\lambda >0$, and has a {\it heavy-tailed} distribution (is heavy-tailed) if 
$\E \exp (\lambda\xi) = \infty$, for all $\lambda>0$.
In \cite{LSK1}, the authors presented a particular example of a light-tailed random variable that is the minimum of two independent heavy-tailed random variables. In \cite{FKT}, it was shown that any light-tailed random variable with right-unbounded support may be represented as the minimum of two independent heavy-tailed random variables, with further generalisations of the result in a number of directions. 

We analyse an ``inverse'' question. Namely, we obtain necessary and sufficient conditions on the distribution of a heavy-tailed random variable, say $\xi_1$,  that allow to find another independent heavy-tailed random variable, say $\xi_2$, such that their minimum $\min (\xi_1,\xi_2)$ is light-tailed. We also provide a number of extensions of this result. 
\end{abstract}

\keywords{light, heavy and long tails; subexponentiality; minimum of independent random variables}.

\section{Introduction and main results} 

It was shown by example in \cite{LSK1} that the minimum of two independent heavy-tailed random variables may be light-tailed. 
The example, however, is somewhat involved and does not fully explain
the mechanism behind this phenomenon.

In \cite{FKT} a systematic construction for generating
such distributions was proposed. Moreover, a simple general scheme for constructing random variables with arbitrarily heavy tails whose minimum has an arbitrarily
light tail was given. The authors of \cite{FKT} also analyse a number of related problems.

In the current article, we look at the problem from a different angle.  
Assume there is given a heavy-tailed random variable $\xi_1$.
What conditions on its distribution $F$ ensure that one can construct another independent heavy-tailed random variable, say $\xi_2$, such that
\[
    \eta := \xi_1 \wedge \xi_2 \equiv \min (\xi_1,\xi_2)
\]
is light-tailed? More generally, what conditions on $F$ guarantee that the tail of $\eta$ is lighter than a prescribed distribution tail?
We establish necessary and sufficient conditions for both scenarios, and provide a number of further comments.

To formulate the results, let us recall  some notation and notions.
We use the same notation $F$ for a probability distribution on the real line and for its
distribution function $F(x)$, and we denote by
\[
    \ov{F}(x) = 1 - F(x)
\]
its right tail.

For a random variable $\xi$ with tail $\ov{F}_{\xi}(x)=\Prob(\xi>x)$,
define its {\bf cumulative hazard} by
\[
R_\xi(x):=-\log \ov{F}_\xi(x) \leqslant \infty,\qquad x\in \R.
\]
If $\xi_1$ and $\xi_2$ are independent and $\eta=\xi_1\wedge \xi_2$, then
\[
\ov{F}_\eta(x)=\ov{F}_{\xi_1}(x)\,\ov{F}_{\xi_2}(x)\quad\text{and hence}\quad
R_\eta(x)=R_{\xi_1}(x)+R_{\xi_2}(x),\ x\in \R.
\]

In general, we will say that a function $R:\R \to [0,\infty)$ is a {\bf cumulative hazard (function)} if $R$ is nondecreasing, right-continuous and satisfies
\[
    \lim_{x \to -\infty} R(x) = 0
    \quad\text{and}\quad
    \lim_{x \to \infty} R(x) = \infty,
\]
which is equivalent to saying that $R=R_\xi$ for some random variable $\xi$ with right-unbounded support.

\begin{defn}
    A distribution $F$ (or a random variable $\xi$ following distribution $F$) has
    a {\bf heavy tail} (is {\bf heavy-tailed}) if, for all $\lambda > 0$,
    \[
        \int_{-\infty}^{\infty} e^{\lambda x} F(dx)
        \equiv \mathbb{E} e^{\lambda \xi} = \infty, 
    \]
    and has a {\bf light tail} (is {\bf light-tailed}) if 
    $\E e^{\lambda \xi} <\infty $, for some $\lambda >0$.
\end{defn}

\begin{rem}\label{Rem1} 
Recall that a random variable $X$ is heavy-tailed if
\[
\liminf_{x\to\infty}\frac{R_X(x)}{x}=0,
\]
and light-tailed otherwise -- see, e.g., \cite{FKZ}.
\end{rem}

We will say that a sequence $1<s_1<t_1<s_2<t_2<\ldots$  increasing to infinity forms a {\bf segmentation} (of the positive half-line) if
\begin{align}\label{eq:C2} 
\lim_{i\to\infty}\frac{s_i}{t_i}=0.
\end{align}

\begin{defn}
    A distribution $F$ (or a random variable $\xi$ following distribution $F$) has
    a {\bf segmented heavy tail} (belongs to the class {\bf SHT}) if it is heavy-tailed and satisfies the following condition:

    \medskip
    \noindent\textbf{(C)} There exist $\gamma>0$ and a segmentation
    $1<s_1<t_1<s_2<t_2<\ldots$ such that
    \[
        \frac{R_{\xi_1}(x)}{x}\geqslant \gamma
        \quad\text{for all }x\in[s_i,t_i],\qquad i=1,2,\dots .
    \]
\end{defn}

Note that distributions in the class SHT have very irregular right tails.

Here is our first main result. 

\begin{thm}\label{thm:main_C}
Let $\xi_1$ be a heavy-tailed random variable. The following are
equivalent:
\begin{itemize}
\item[(i)] There exists a heavy-tailed random variable $\xi_2$,
independent of $\xi_1$, such that $\eta:=\xi_1\wedge \xi_2$ is light-tailed; 
\item[(ii)] $\xi_1$ has a segmented heavy tail.
\end{itemize}
\end{thm}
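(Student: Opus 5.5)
We prove the two implications separately, working throughout with cumulative hazards, the identity $R_\eta=R_{\xi_1}+R_{\xi_2}$, and the criterion of Remark~\ref{Rem1}: a variable $X$ is heavy-tailed if and only if $\liminf_{x\to\infty}R_X(x)/x=0$.

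\emph{(i)$\Rightarrow$(ii).} Suppose $\eta$ is light-tailed. By Remark~\ref{Rem1} there are $c>0$ and $x_0$ with $R_{\xi_1}(x)+R_{\xi_2}(x)\geqslant 2c x$ for all $x\geqslant x_0$. Put $\gamma=c$. The key observation is that at every $x\geqslant x_0$ at least one of $R_{\xi_1}(x)\geqslant cx$ or $R_{\xi_2}(x)\geqslant cx$ holds. Since $\xi_2$ is heavy-tailed, there are arbitrarily large $u$ with $R_{\xi_2}(u)\leqslant \varepsilon u$, for any prescribed $\varepsilon>0$.

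Fix such a $u$ with a small $\varepsilon\leqslant c/2$. For $x\in[u,\, u\cdot c/\varepsilon]$ we cannot use monotonicity of $R_{\xi_2}$ directly, since it gives the wrong direction. Instead we use monotonicity of $R_{\xi_1}$. At the point $u$ we have $R_{\xi_1}(u)\geqslant 2cu-\varepsilon u\geqslant cu\cdot(3/2)$. Hence for $x\in[u,\tfrac32 u]$ we get $R_{\xi_1}(x)\geqslant R_{\xi_1}(u)\geqslant \tfrac32 cu\geqslant cx$. This gives only a bounded ratio, which is too weak; the segmentation must come from the other side.

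So we use monotonicity \emph{backward}. For $x\leqslant u$ we have $R_{\xi_2}(x)\leqslant R_{\xi_2}(u)\leqslant\varepsilon u$. Therefore, for $x\in[\varepsilon u/c,\,u]$ we have $R_{\xi_2}(x)\leqslant cx$, and consequently $R_{\xi_1}(x)\geqslant 2cx-cx= cx$. Now choose $\varepsilon_i\downarrow0$ and $u_i\to\infty$ with $R_{\xi_2}(u_i)\leqslant\varepsilon_i u_i$, and set $t_i=u_i$, $s_i=\max(\varepsilon_i u_i/c,\,x_0,\,1+)$. Passing to a subsequence makes $s_1<t_1<s_2<\dots$, and $s_i/t_i\to0$. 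This is condition (C) with $\gamma=c$, and $\xi_1$ is heavy-tailed by assumption.

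\emph{(ii)$\Rightarrow$(i).} Given the segmentation $(s_i,t_i)$ and $\gamma$ from (C), pass to a subsequence so that the gaps are large, for instance $t_i\geqslant i\,s_i$ and $s_{i+1}\geqslant i\,t_i$ (possible since $t_i/s_i\to\infty$ and $s_i\to\infty$). We construct $R_{\xi_2}$ to be large off $\bigcup[s_i,t_i]$ but small at the points $t_i$. Define $R_{\xi_2}(x)=\gamma x$ on $[t_i,s_{i+1})$ and $R_{\xi_2}(x)=\gamma s_i$, held constant, on $[s_i,t_i)$. This already fails to be nondecreasing at $t_i$ only in the upward direction, which is allowed, and constant pieces are fine. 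Set $R_{\xi_2}$ arbitrary (for instance $0$ for $x<s_1$, adjusted to be a cumulative hazard).

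We then check three properties:
\begin{itemize}
\item[(a)] $R_{\xi_2}$ is nondecreasing and right-continuous, hence a cumulative hazard.
\item[(b)] $R_{\xi_2}(t_i-)/t_i=\gamma s_i/t_i\to0$. This gives heaviness via Remark~\ref{Rem1}, using a point slightly left of $t_i$, or simply by defining the switch at a right-continuous point.
\item[(c)] For $x\geqslant s_1$, either $x\in[s_i,t_i)$, where $R_{\xi_1}(x)\geqslant\gamma x$ by (C), or $x\in[t_i,s_{i+1})$, where $R_{\xi_2}(x)=\gamma x$. Hence $R_\eta(x)\geqslant\gamma x$, so $\eta$ is light-tailed.
\end{itemize}
Take $\xi_2$ independent of $\xi_1$ with this hazard.

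The main obstacle is the direction (i)$\Rightarrow$(ii). The naive forward use of monotonicity only gives intervals of bounded ratio. The step to get right is to use monotonicity of $R_{\xi_2}$ backward from its heavy points, which produces intervals $[\varepsilon u/c,u]$ whose ratio tends to infinity.
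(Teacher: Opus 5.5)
Your proof is correct and follows the paper's argument in both directions. For (ii)$\Rightarrow$(i) you use the same $\xi_2$ (hazard $\gamma x$ on $[t_i,s_{i+1})$, frozen at $\gamma s_i$ on $[s_i,t_i)$). For (i)$\Rightarrow$(ii) you use the same backward monotonicity of $R_{\xi_2}$ from its heavy points $u_i$, with your $s_i=\varepsilon_i u_i/c$ playing the role of the paper's $v_i=u_i e^{-i/2}$.

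The one step you leave unjustified is that $s_i\to\infty$, which you need to pass to an interlacing subsequence. It does hold: otherwise $R_{\xi_1}(x)\geqslant cx$ for all large $x$, contradicting heaviness of $\xi_1$. Alternatively, simply choose each $u_i$ so large that $\varepsilon_i u_i/c>u_{i-1}$.
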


\begin{cor}\label{cor:limsup_infty}
If
\begin{align}\label{suf1}
\limsup_{x\to\infty}\frac{R_{\xi_1}(x)}{x}=\infty,
\end{align}
then $\xi_1$ satisfies condition \textbf{(C)}.
\end{cor}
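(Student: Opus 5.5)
Since $\limsup_{x\to\infty} R_{\xi_1}(x)/x=\infty$, we can pick points $x_1<x_2<\dots$ increasing to infinity with $x_1>1$ and $R_{\xi_1}(x_i)/x_i\ge i^2$; we may also arrange $x_{i+1}>i\,x_i$ by passing to a subsequence. The plan is to take $\gamma=1$, say, and to build each segment $[s_i,t_i]$ so that it ends at $t_i=x_i$. The only tool needed is the monotonicity of the cumulative hazard: $R_{\xi_1}$ is nondecreasing, so for $x\le x_i$ we have $R_{\xi_1}(x)\ge R_{\xi_1}(x)$ only trivially. The useful direction is that for every $x\ge x_i$,
\[
\frac{R_{\xi_1}(x)}{x}\ \ge\ \frac{R_{\xi_1}(x_i)}{x}\ \ge\ \frac{i^2 x_i}{x}.
\]
This pushes the segment to the right of $x_i$ rather than to the left.

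Accordingly we set $s_i:=x_i$ and $t_i:=i\,x_i$. For $x\in[s_i,t_i]$ the display gives $R_{\xi_1}(x)/x\ge i^2x_i/(i x_i)=i\ge 1=\gamma$, so condition \textbf{(C)} holds on every segment. It remains to check that the points form a segmentation. We have $s_i/t_i=1/i\to 0$, which is \eqref{eq:C2}. For the ordering, $s_i<t_i$ holds for $i\ge2$. We also have $t_i=i x_i<x_{i+1}=s_{i+1}$ by the subsequence choice, and $s_i=x_i>1$. If $i=1$ causes trouble, we simply discard the first index and relabel; equivalently, we start the construction from $i=2$.

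There is essentially no obstacle here. The one point requiring care is the observation that monotonicity only propagates lower bounds on $R_{\xi_1}$ to the right of $x_i$. Because $R_{\xi_1}(x_i)$ exceeds $x_i$ by a growing factor, this bound stays above $\gamma x$ on an interval whose length ratio grows, and that is exactly what \eqref{eq:C2} requires. Heavy-tailedness of $\xi_1$ is assumed in the setting and is not used in verifying \textbf{(C)}. If one also wants $\xi_1\in$ SHT, it follows from being heavy-tailed together with \textbf{(C)}.
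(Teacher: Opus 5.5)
Your proof is correct and follows essentially the same route as the paper. The paper also picks a point $s_i$ with $R_{\xi_1}(s_i)/s_i\geqslant (i+1)\gamma$, sets $t_i=(i+1)s_i$, and uses monotonicity to get $R_{\xi_1}(x)/x\geqslant R_{\xi_1}(s_i)/t_i\geqslant\gamma$ on $[s_i,t_i]$; choosing the points $x_i$ directly spares you the right-continuity argument behind the paper's infimum, and your bound $\geqslant i$ gives \textbf{(C)} for every $\gamma>0$ once finitely many indices are dropped. Two sentences should be fixed: the claim that the segments end at $t_i=x_i$ contradicts your actual construction (which starts them at $s_i=x_i$), and the sentence about $x\leqslant x_i$ is garbled and should be deleted.
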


Theorem~\ref{thm:main_C} gives a complete answer to the question when a given
heavy-tailed $\xi_1$ admits an independent heavy-tailed $\xi_2$ such that
$\xi_1\wedge \xi_2$ becomes light-tailed. Now we extend this
criterion to comparisons with a prescribed tail scale.

\begin{defn}
Let $F$ be a distribution  with right-unbounded support and let $R$ be its associated cumulative hazard function. 
A right-unbounded random variable $\xi$ with cumulative hazard $R_\xi$ 
is called {\bf $F$-heavy} or {\bf $R$-heavy} if
\[
  \liminf_{x\to\infty}\frac{R_\xi(x)}{R(x)}=0,
\]
and {\bf $F$-light} or {\bf $R$-light}, otherwise.
\end{defn}

\begin{rem}\label{Rem2}
  Taking $R(x)=x\vee 0$ recovers the standard notion 
  of
  light/heavy tails, see Remark \ref{Rem1}.
\end{rem}

Let $R_\downarrow(x)$ and $R_\uparrow(x)$ be two cumulative hazard functions.
We give necessary and sufficient conditions on the law of
an $R_\downarrow$-heavy random variable $\xi_1$ under which there exists an independent
$R_\downarrow$-heavy random variable $\xi_2$ such that $\eta := \xi_1 \wedge \xi_2$ is
$R_\uparrow$-light.

\begin{defn}
\label{cond:segmentation}
Let $R_\downarrow$ and $R_\uparrow$ be cumulative hazard functions such that
$R_\downarrow(x)\leqslant R_\uparrow(x)$ for all $x\geqslant x_0$, for some $x_0\in\mathbb{R}$.
A sequence of real numbers $s_1<t_1<s_2<t_2<\cdots$ is called an
{\bf $(R_\downarrow,R_\uparrow)$-segmentation} if $R_\downarrow(s_1)>1$ and
\begin{equation}
\label{eq:segmentation_condition}
\lim_{i\to\infty}\frac{R_\uparrow(s_i-0)}{R_\downarrow(t_i)}=0.
\end{equation}
\end{defn}

\begin{thm}
\label{proposition:main}
Let $R_\downarrow$ and $R_\uparrow$ be cumulative hazard functions with
$R_\downarrow(x)\leqslant R_\uparrow(x)$ for all $x\geqslant x_0$, for some $x_0\in\mathbb{R}$,
and let $\xi_1$ be an $R_\downarrow$-heavy random variable.
Then the following are
equivalent:
\begin{itemize}
  \item[(i)] There exists, on a suitable probability space, a 
  $R_\downarrow$-heavy random variable $\xi_2$, independent of $\xi_1$, such
  that $\eta:=\xi_1\wedge \xi_2$ is $R_\uparrow$-light.
  \item[(ii)] There exist $\gamma>0$ and an $(R_\downarrow,R_\uparrow)$-segmentation
  $s_1<t_1<s_2<t_2<\cdots$ such that
  \[
    \frac{R_{\xi_1}(x)}{R_\uparrow(x)}\geqslant \gamma
    \qquad\text{for all }x\in[s_i,t_i],\ i=1,2,\dots .
  \]
\end{itemize}
\end{thm}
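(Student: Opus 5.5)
We prove the two implications separately, working throughout with cumulative hazards and the additivity $R_\eta=R_{\xi_1}+R_{\xi_2}$. The key observation is that $\eta$ is $R_\uparrow$-light iff there is $c>0$ with $R_{\xi_1}(x)+R_{\xi_2}(x)\geqslant c\,R_\uparrow(x)$ for all large $x$. At each point, at least one of the two hazards must therefore carry a fixed fraction of $R_\uparrow$. Meanwhile, $R_\downarrow$-heaviness of $\xi_2$ forces $R_{\xi_2}$ to dip far below $R_\downarrow\leqslant R_\uparrow$ along some sequence.

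\textbf{(i)$\Rightarrow$(ii).} Fix $c$ as above and set $\gamma=c/2$.

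Since $\xi_2$ is $R_\downarrow$-heavy, choose points $t_i\to\infty$ with $R_{\xi_2}(t_i)\leqslant \varepsilon_i R_\downarrow(t_i)$, where $\varepsilon_i\downarrow 0$. Define
\[
s_i:=\inf\{y\leqslant t_i:\ R_{\xi_2}(x)\leqslant \tfrac c2 R_\uparrow(x)\ \text{for all } x\in[y,t_i]\}.
\]
On $[s_i,t_i]$ we have $R_{\xi_2}\leqslant \frac c2 R_\uparrow$, hence $R_{\xi_1}\geqslant \frac c2 R_\uparrow$ there. It remains to check the segmentation condition \eqref{eq:segmentation_condition}.

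Just to the left of $s_i$ there are points $x<s_i$, arbitrarily close to $s_i$, with $R_{\xi_2}(x)>\frac c2R_\uparrow(x)$. Monotonicity and left limits then give
\[
\tfrac c2 R_\uparrow(s_i-0)\leqslant R_{\xi_2}(s_i-0)\leqslant R_{\xi_2}(t_i)\leqslant\varepsilon_iR_\downarrow(t_i),
\]
so the ratio in \eqref{eq:segmentation_condition} tends to $0$.

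Some bookkeeping remains:
\begin{itemize}
\item The case where the infimum is small or $-\infty$ is excluded for large $i$: this would give $R_{\xi_1}\geqslant\frac c2 R_\uparrow$ on a whole half-line, contradicting the $R_\downarrow$-heaviness of $\xi_1$ because $R_\downarrow\leqslant R_\uparrow$.
\item A closed-interval issue at $s_i$ (right-continuity) is handled by shrinking the interval slightly or absorbing a constant into $\gamma$.
\item Passing to a subsequence makes the $s_i,t_i$ interlace and gives $R_\downarrow(s_1)>1$. Note that $s_i\to\infty$, because $R_\uparrow(s_i-0)=o(R_\downarrow(t_i))$ while $R_\downarrow(t_i)\to\infty$ and $s_i$ is bounded below.
\end{itemize}

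\textbf{(ii)$\Rightarrow$(i).} We construct $\xi_2$ explicitly. Pass to a subsequence so that $\delta_i:=R_\uparrow(s_i-0)/R_\downarrow(t_i)$ is summable and decreasing fast; also use the $R_\downarrow$-heaviness of $\xi_1$ to select times where $R_{\xi_1}$ is small. The plan is to let $R_{\xi_2}$ equal $R_\uparrow$ off the segments (on the gaps $[t_{i-1},s_i)$) and be frozen at level $R_\uparrow(s_i-0)$ across $[s_i,t_i]$, so that
\[
R_{\xi_2}(x)=\begin{cases} R_\uparrow(x), & x\in[t_{i-1},s_i),\\ R_\uparrow(s_i-0), & x\in[s_i,t_i].\end{cases}
\]
At $t_i$ it jumps back up to $R_\uparrow(t_i)$. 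This is nondecreasing and right-continuous, tends to $\infty$, and is a valid cumulative hazard.

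Both required properties then follow:
\begin{itemize}
\item \emph{$\xi_2$ is $R_\downarrow$-heavy:} at $t_i-0$ we have $R_{\xi_2}/R_\downarrow\leqslant R_\uparrow(s_i-0)/R_\downarrow(t_i-0)$, which tends to $0$ by \eqref{eq:segmentation_condition}, modulo a left-limit issue handled by replacing $t_i$ with a slightly smaller point.
\item \emph{$\eta$ is $R_\uparrow$-light:} on the gaps $R_\eta\geqslant R_{\xi_2}=R_\uparrow$, and on the segments $R_\eta\geqslant R_{\xi_1}\geqslant\gamma R_\uparrow$. Hence $R_\eta\geqslant \min(1,\gamma)R_\uparrow$ eventually.
\end{itemize}

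The main obstacle is the careful handling of one-sided limits and endpoint conventions in \eqref{eq:segmentation_condition} together with the closed intervals $[s_i,t_i]$, especially in the necessity direction when choosing $s_i$. I would first try defining $s_i$ via the infimum as above and verify the left-limit inequality directly.
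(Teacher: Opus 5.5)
Your proposal follows the paper's proof in both directions. For (ii)$\Rightarrow$(i) you freeze $R_{\xi_2}$ at $R_\uparrow(s_i-0)$ on the segments and let it follow $R_\uparrow$ on the gaps. For (i)$\Rightarrow$(ii) you pick $t_i$ where $R_{\xi_2}/R_\downarrow$ is small, take $s_i$ as a last-crossing time, and bound $R_\uparrow(s_i-0)$ via $R_{\xi_2}(s_i-0)\leqslant R_{\xi_2}(t_i)$. Tracking the run where $R_{\xi_2}\leqslant\frac{c}{2}R_\uparrow$, instead of the paper's last time with $R_{\xi_1}/R_\uparrow<\gamma/4$, is only the dual formulation, and it gives the left-limit bound a little more directly.

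One step would fail as written: the patch ``replace $t_i$ by a slightly smaller point'' in the heaviness check for $\xi_2$. Condition \eqref{eq:segmentation_condition} controls $R_\uparrow(s_i-0)/R_\downarrow(t_i)$, not $R_\uparrow(s_i-0)/R_\downarrow(t_i-0)$. If $R_\downarrow$ jumps at $t_i$, moving $t_i$ to the left destroys the segmentation ratio.

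Here is a counterexample. Let $R_\downarrow=R_\uparrow=R$ with $R=B_n$ on $[n,n+1)$ and $B_n/B_{n-1}\to\infty$. Take $s_i=3i+\tfrac12$, $t_i=3i+1$, and set $R_{\xi_1}=R$ on $[3i,3i+2)$ and $R_{\xi_1}=B_{3i+1}$ on $[3i+2,3i+3)$; this $\xi_1$ is $R$-heavy. Then (ii) holds with $\gamma=1$. But your frozen level $R(s_i-0)=B_{3i}$ coincides with $R$ on $[s_i,t_i)$. So your $R_{\xi_2}$ equals $R$ beyond $s_1$, and $\xi_2$ is $R_\downarrow$-light. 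Moreover $R(s_i-0)/R(t_i')=1$ for every $t_i'\in(s_i,t_i)$, so shrinking does not help.

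The repair goes the other way. The bound in (ii) holds at $t_i$ itself, and $R_{\xi_1}/R_\uparrow$ is right-continuous, so $R_{\xi_1}\geqslant\frac{\gamma}{2}R_\uparrow$ on some $[t_i,t_i+\delta_i)$. Freeze $R_{\xi_2}$ on $[s_i,t_i+\delta_i)$. Then $R_{\xi_2}(t_i)/R_\downarrow(t_i)=R_\uparrow(s_i-0)/R_\downarrow(t_i)\to 0$ is an actual value of the ratio. The paper's own write-up has the same looseness: it evaluates $R_{\xi_2}(t_i-0)/R_\downarrow(t_i)$, which is not a value of $R_{\xi_2}/R_\downarrow$ when $R_\downarrow$ jumps at $t_i$.

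In (i)$\Rightarrow$(ii) there are three smaller points.
\begin{itemize}
\item Your stated reason for $s_i\to\infty$ is a non sequitur, since $R_\uparrow(s_i-0)=o(R_\downarrow(t_i))$ is compatible with bounded $s_i$. The correct reason is your first bullet, with ``small'' read as ``$\leqslant M$'' for arbitrary $M$. If $s_i\leqslant M$ infinitely often, then $R_{\xi_1}\geqslant\frac{c}{2}R_\uparrow\geqslant\frac{c}{2}R_\downarrow$ on a half-line, contradicting heaviness of $\xi_1$.
\item Your $s_i$ can equal $t_i$ when $R_\uparrow$ jumps at $t_i$, which gives a degenerate segment. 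Since $R_{\xi_1}(t_i)\geqslant(c-\varepsilon_i)R_\uparrow(t_i)$, the same right-continuity extension to $[t_i,t_i+\delta_i]$ fixes this without hurting \eqref{eq:segmentation_condition}.
\item Your closed-interval worry at $s_i$ is unfounded: right-continuity already gives $R_{\xi_2}(s_i)\leqslant\frac{c}{2}R_\uparrow(s_i)$.
\end{itemize}
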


\begin{cor}\label{Cor2} 
  Under the assumptions of Theorem \ref{proposition:main},
  if in addition $R_\downarrow \equiv R_\uparrow$ and
  \begin{equation}
  \label{cond:supremum_condition}
    \limsup_{x\to\infty}\frac{R_{\xi_1}(x)}{R_\uparrow(x)}=\infty,
  \end{equation}
  then $\xi_1$ satisfies $(R_\uparrow,R_\uparrow)$-segmentation
  for every $\gamma > 0$.
\end{cor}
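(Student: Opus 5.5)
We prove Corollary~\ref{Cor2} by building the segmentation directly from the two pieces of information we have: the heaviness of $\xi_1$ and the unbounded $\limsup$ in \eqref{cond:supremum_condition}. Write $R:=R_\uparrow=R_\downarrow$ and fix $\gamma>0$. We must find $s_1<t_1<s_2<\cdots$ with $R(s_1)>1$, with $R(s_i-0)/R(t_i)\to0$, and with $R_{\xi_1}(x)\geqslant\gamma R(x)$ on every $[s_i,t_i]$.

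The construction is inductive, and we use the degenerate choice $s_i=t_i$ wherever possible.

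\textbf{Step 1 (a far peak).} Suppose $t_{i-1}$ has been chosen. Since $R(x)\to\infty$, we can take a level $M_i$ so large that $R(x)>\max(1,R(t_{i-1}))$ whenever $R_{\xi_1}(x)\geqslant M_i$ and the surrounding conditions hold. By \eqref{cond:supremum_condition}, there is a point $y_i>t_{i-1}+1$ with $R(y_i)>1$ and
\[
R_{\xi_1}(y_i)\geqslant i\,\gamma\,R(y_i).
\]

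\textbf{Step 2 (a window before the peak).} We set $t_i:=y_i$ and move left from $y_i$. Define
\[
s_i:=\inf\{x\leqslant y_i:\ R_{\xi_1}(u)\geqslant\gamma R(u)\ \text{for all } u\in[x,y_i]\}.
\]
On the interval $[s_i,t_i]$ we have $R_{\xi_1}\geqslant\gamma R$ by construction, at least up to endpoint and right-continuity issues. We then need the ratio $R(s_i-0)/R(t_i)$ to be small. At the point $s_i$ the condition fails from the left, so $R_{\xi_1}(s_i-0)\lesssim\gamma R(s_i-0)$. Monotonicity of $R_{\xi_1}$ then gives
\[
R_{\xi_1}(s_i-0)\leqslant R_{\xi_1}(y_i).
\]
This inequality points the wrong way for our purpose.

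\textbf{Step 3 (the alternative via heaviness).} The bound above does not control $R(s_i-0)$, so we use the heaviness of $\xi_1$ instead. The liminf of $R_{\xi_1}/R$ is $0$, so there exist points $z<y_i$ (beyond $t_{i-1}$) where $R_{\xi_1}(z)\leqslant\gamma R(z)$. Take $z_i$ to be the last such point before $y_i$. The natural hope is
\[
\gamma R(s_i-0)\approx R_{\xi_1}(s_i-0)\leqslant R_{\xi_1}(y_i),
\]
but comparing with $R(y_i)$ requires $R_{\xi_1}(y_i)\lesssim R(y_i)$, and that fails since $R_{\xi_1}(y_i)\geqslant i\gamma R(y_i)$. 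So in general the window $[s_i,y_i]$ need not have a small ratio.

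\textbf{Main obstacle.} This is the real difficulty. A jump in $R_{\xi_1}$ at $y_i$ could create a large ratio at the peak while $R$ itself stays flat, so the point $y_i$ alone does not give $R(s_i-0)/R(t_i)\to0$. The fix I would try first is to slide the window to the right of the peak rather than the left. Set $s_i:=y_i$ and let
\[
t_i:=\sup\{x:\ R(x)\leqslant R_{\xi_1}(y_i)/\gamma\}.
\]
For $x\in[y_i,t_i]$, monotonicity gives
\[
R_{\xi_1}(x)\geqslant R_{\xi_1}(y_i)\geqslant\gamma R(x),
\]
so the required lower bound holds on the whole window.

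It remains to check the ratio. The endpoint behaviour of $R$ at $t_i$ gives $R(t_i)\geqslant R_{\xi_1}(y_i)/\gamma$ (when this level is attained, or via the right limit), and this is at least $iR(y_i)$. Hence
\[
\frac{R(s_i-0)}{R(t_i)}\leqslant\frac{R(y_i)}{iR(y_i)}=\frac1i\to0.
\]
We also need $t_i>s_i$ and the strict ordering of the intervals. The ordering is arranged by choosing $y_{i+1}>t_i+1$; this is possible because \eqref{cond:supremum_condition} holds along arbitrarily large $x$, and $t_i<\infty$ because $R(x)\to\infty$.

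This right-sliding construction gives the segmentation for every $\gamma$. The careful handling of jumps of $R$ at $t_i$ (using $R(t_i)$ versus $R(t_i-0)$) is the remaining technical point to verify.
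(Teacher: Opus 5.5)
\textbf{Same construction as the paper, but the endpoint gap you flagged is genuine and cannot be closed in general.} Your final right-sliding construction is essentially the paper's proof. The paper takes $s_i$ as the first point after $t_{i-1}$ with $R_{\xi_1}/R_\uparrow\geqslant A_i\gamma$, takes $t_i$ as the first time $R_\uparrow$ reaches $A_iR_\uparrow(s_i)$, and then uses monotonicity of $R_{\xi_1}$. You do the same with $i\gamma$ and the level $R_{\xi_1}(y_i)/\gamma$. Steps 1--3 are dead ends and can be dropped. For $i=1$ you should use $(i+1)\gamma$ instead of $i\gamma$, so that $t_1>s_1$.

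The ``remaining technical point'' is a real gap, and the paper's proof shares it: the paper only establishes the bound on the half-open interval $[s_i,t_i)$. Your two claims are, first, $R_\uparrow\leqslant R_{\xi_1}(y_i)/\gamma$ on all of $[y_i,t_i]$, and second, $R_\uparrow(t_i)\geqslant R_{\xi_1}(y_i)/\gamma$. They are compatible only if $R_\uparrow(t_i)$ equals that level exactly. If $R_\uparrow$ jumps over the level at $t_i$, the lower bound fails at $t_i$. Shrinking $t_i$ does not help, because it can destroy $R_\uparrow(s_i-0)/R_\uparrow(t_i)\to0$, for instance when $R_\uparrow$ is flat on $[y_i,t_i)$.

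\textbf{The closed-interval statement is false when $R_\uparrow$ has jumps.} Take $B_1=1$ and $B_{n+1}=n^2B_n$. Set $R_\uparrow=B_n$ on $[n,n+1)$ and $R_{\xi_1}=nB_n$ on $[n+\tfrac12,n+\tfrac32)$ for $n\geqslant1$, with both equal to $0$ further left.
\begin{itemize}
\item The ratio $R_{\xi_1}/R_\uparrow$ equals $n$ on $[n+\tfrac12,n+1)$ and $1/n$ on $[n+1,n+\tfrac32)$. So $\xi_1$ is $R_\uparrow$-heavy and the $\limsup$ is infinite, as the corollary requires.
\item Fix any $\gamma>0$. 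Far out, the set $\{R_{\xi_1}\geqslant\gamma R_\uparrow\}$ is $\bigcup_n[n+\tfrac12,n+1)$. Since $t_i\to\infty$ is forced, all but finitely many closed intervals $[s_i,t_i]$ must lie inside a single piece $[n+\tfrac12,n+1)$. There $R_\uparrow(s_i-0)=R_\uparrow(t_i)=B_n$, so the ratio is $1$ and no segmentation exists.
\item Consistently, condition (i) of Theorem~\ref{proposition:main} also fails. We have $R_{\xi_1}(n)=B_n/(n-1)$. Hence $R_\eta\geqslant cR_\uparrow$ forces $R_{\xi_2}(n)\geqslant\tfrac c2 B_n$ for large $n$, and therefore $R_{\xi_2}\geqslant\tfrac c2R_\uparrow$ eventually, so $\xi_2$ is not $R_\uparrow$-heavy.
\item In this example both your construction and the paper's produce $t_i=n+1$, which is exactly where the bound fails.
\end{itemize}

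\textbf{What your argument does prove.} When $R_\uparrow$ is continuous, $R_\uparrow(t_i)$ hits the level exactly, so you get the closed-interval statement. This covers Corollary~\ref{cor:limsup_infty}. For general $R_\uparrow$ you only get the half-open version. The example shows that this version is strictly weaker and does not suffice for (i) of Theorem~\ref{proposition:main}.
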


We can also extend this result by considering not one, but $n-1$ complementary random variables
such that only the minimum of $k \leqslant n$ or more of them has a light tail.

\begin{thm}[$k$-out-of-$n$ light minimum]
\label{thm:k_out_of_n_segmentation}
Let $R_\downarrow$ and $R_\uparrow$ be cumulative hazards such that
\[
  R_\downarrow(x)\leqslant R_\uparrow(x), \qquad x\geqslant x_0,
\]
and let $\xi_1$ be an $R_\downarrow$-heavy random variable.

Fix $n\geqslant 2$ and $1<k\leqslant n$. The following are equivalent:
\begin{enumerate}
\item[(i)] There exist $R_\downarrow$-heavy random variables $\xi_2,\dots,\xi_n$, independent of $\xi_1$ and each other, such that for every
$I\subseteq\{1,\dots,n\}$,
\[
  \bigwedge_{i\in I}\xi_i \text{ is $R_\uparrow$-light if } |I|=k,
  \qquad
  \bigwedge_{i\in I}\xi_i \text{ is $R_\downarrow$-heavy if } |I|=k-1.
\]

\item[(ii)] There exist $\gamma>0$ and an $(R_\downarrow,R_\uparrow)$-segmentation
$s_1<t_1<s_2<t_2<\cdots$ such that
\[
  \frac{R_{\xi_1}(x)}{R_\uparrow(x)}\geqslant \gamma
  \qquad\text{for all }x\in[s_i,t_i],\ i=1,2,\dots .
\]
\end{enumerate}
\end{thm}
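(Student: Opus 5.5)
We reduce both directions to Theorem~\ref{proposition:main}, which we take as established.

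\emph{(i)$\Rightarrow$(ii).} Take $I=\{1,i_2,\dots,i_k\}$ with $|I|=k$ and containing the index $1$. Set $\zeta:=\bigwedge_{j\in I\setminus\{1\}}\xi_j$. Since $|I\setminus\{1\}|=k-1$, hypothesis (i) says $\zeta$ is $R_\downarrow$-heavy. It is also independent of $\xi_1$, and $\xi_1\wedge\zeta$ is $R_\uparrow$-light. So $\zeta$ plays the role of $\xi_2$ in Theorem~\ref{proposition:main}(i), and that theorem gives the segmentation in (ii). The only point to check is that $\zeta$ is right-unbounded. This holds because $R_\zeta=\sum R_{\xi_j}$ is finite everywhere, each $\xi_j$ being right-unbounded.

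\emph{(ii)$\Rightarrow$(i).} The plan is to split the segments among the variables. First pass to a subsequence of the segmentation $(s_i,t_i)$ so that (\ref{eq:segmentation_condition}) holds with summable, rapidly decaying ratios. Partition the indices $i$ into $n-1$ infinite classes $J_2,\dots,J_n$, say by residues mod $n-1$. The construction in the proof of Theorem~\ref{proposition:main} presumably builds $\xi_2$ by declaring $R_{\xi_2}$ large, of order $R_\uparrow$, outside the segments, so that $\eta$ is light there, and small, of order $R_\uparrow(s_i-0)$, at the points $t_i$, which makes it $R_\downarrow$-heavy. Here we build each $\xi_m$, $m\ge2$, by that recipe, but $\xi_m$ is ``cheap'' only on the segments $[s_i,t_i]$ with $i\in J_m$. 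On the gaps $(t_i,s_{i+1})$ we take $R_{\xi_m}\ge R_\uparrow$. On the remaining segments we also take $R_{\xi_m}\ge \gamma R_\uparrow$, for instance by letting $R_{\xi_m}$ jump to $R_\uparrow(t_i)$ at $s_i$. These jumps keep $R_{\xi_m}$ monotone and right-continuous. Each $\xi_m$ is $R_\downarrow$-heavy along $t_i$, $i\in J_m$, because there $R_{\xi_m}(t_i)\approx R_\uparrow(s_i-0)=o(R_\downarrow(t_i))$.

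With this construction, the proof of (i) has two parts.
\begin{itemize}[leftmargin=*]
\item Lightness when $|I|=k\ge2$. If $1\in I$, any $x$ in a segment has $R_{\xi_1}(x)\ge\gamma R_\uparrow(x)$, and any $x$ in a gap has $R_{\xi_m}(x)\ge R_\uparrow(x)$ for every $m\in I$. If $1\notin I$, then $I$ contains two distinct $m,m'$. Every segment is expensive for at least one of them, and every gap is expensive for all of them. Either way $R_{\bigwedge_I\xi}(x)\ge\min(\gamma,1)R_\uparrow(x)$ for all large $x$, which gives $R_\uparrow$-lightness.
\item Heaviness when $|I|=k-1$. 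Here we need a subsequence along which $\sum_{j\in I}R_{\xi_j}=o(R_\downarrow)$. If $1\in I$, this asks $\xi_1$ to be small where the other members of $I$ are small. That is only available when $I=\{1\}$, i.e.\ $k=2$, where it is just the hypothesis that $\xi_1$ is $R_\downarrow$-heavy.
\end{itemize}

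So the main obstacle, and in fact a gap in the single-cheap-class design, is the case $1\in I$, $|I|=k-1\ge2$. Such an $I$ must remain heavy, yet $\xi_1$ alone controls the segments. The fix is to make the cheap classes overlap. The cheap set of each $\xi_m$ is enlarged so that, on a subsequence of points $x_\ell$ where $\xi_1$ is cheap (with $R_{\xi_1}(x_\ell)=o(R_\downarrow(x_\ell))$, which exist by heaviness), the selected $\xi_m$'s are cheap simultaneously. The cheap zones are then arranged combinatorially: for each $(k-1)$-subset $I$ there are infinitely many points where all of $I$ is cheap. Conversely, at every large $x$ at most $k-1$ of $\xi_1,\dots,\xi_n$ are cheap. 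This rule is a ``$(k-1)$-out-of-$n$'' covering design; we cycle through all $(k-1)$-subsets, containing $1$ or not, over successive scales. One subtlety is that on a segment $\xi_1$ is expensive, so subsets containing $1$ must be served at $\xi_1$'s cheap points $x_\ell$. There we need at most $k-2$ other variables cheap. The second subtlety is making the transitions between cheap and expensive regimes monotone. This is handled, as in Theorem~\ref{proposition:main}, by jumps, using the fast growth of the chosen scales via a further subsequence of the segmentation together with $R_\downarrow\le R_\uparrow$.
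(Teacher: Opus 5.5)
Your (i)$\Rightarrow$(ii) is the paper's argument: reduce to Theorem~\ref{proposition:main} with $\zeta$ the minimum of $k-1$ of the other variables.

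For (ii)$\Rightarrow$(i), your repaired plan has two parts, and only the second differs from the paper.

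\textbf{Subsets containing $1$.} You cycle through all $(k-1)$-subsets. For a subset containing $1$ you freeze its $k-2$ companions, keep every other index above $\gamma R_\uparrow$, and end the block where $R_{\xi_1}=o(R_\downarrow)$. This coincides with the paper's treatment of blocks with $1\in I_l$.

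\textbf{Subsets $I\not\ni 1$.} Here the routes genuinely differ. You freeze all of $I$ on a segment $[s_i,t_i]$, where $\xi_1$ itself supplies $R_{\xi_1}\geqslant\gamma R_\uparrow$, and you get heaviness from $R_\uparrow(s_i-0)=o(R_\downarrow(t_i))$. The paper never returns to the segments. It takes the complement $\xi'$ from Theorem~\ref{proposition:main} and forces $\sum_{i\in I_l}R_i\geqslant R'$ on the whole block. The only uncovered $k$-set, $I_l\cup\{1\}$, is then handled by $R_1+R'\geqslant\gamma R_\uparrow$, and heaviness comes from ending the block where $R'=o(R_\downarrow)$.

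\textbf{What each route buys.} The paper's route is modular: it derives the general case from the $n=2$ case alone, as in Remark~\ref{Rem3}, and needs no knowledge of where $\xi_1$ is large. Your route is more explicit, but it relies on an invariant you state only loosely. Because $\xi_1$ is uncontrolled off the segments, you need the following.
\begin{itemize}
\item At \emph{every} point off the segments, not just at the chosen $x_\ell$, at most $k-2$ of $\xi_2,\dots,\xi_n$ may lie below $\gamma R_\uparrow$.
\item All constructed hazards stay $O(R_\uparrow)$.
\end{itemize}
With these in place, a set $I\not\ni 1$ enters its segment at total level of order $R_\uparrow(s_i-0)$, and that is exactly where the segmentation ratio is used.

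\textbf{Two remarks on your write-up.} Your diagnosis of the single-class design is incomplete. It fails equally for $(k-1)$-sets containing two indices from $\{2,\dots,n\}$, since those variables are never simultaneously cheap in that design. Your covering-design fix handles both cases. Still, as written, the (ii)$\Rightarrow$(i) part is a correct sketch rather than a finished construction.
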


\begin{rem}\label{Rem3}
  A key point is that the condition in Theorem~\ref{thm:k_out_of_n_segmentation}
  does not depend on the specific values of $k$ and $n$, as long as $1 < k \leqslant n$.
  That is, the same condition is necessary and sufficient for all admissible
  pairs $(k,n)$.
 This follows from the fact that once the statement is established in the case
  $n=2$, the general case can be obtained by a decomposition argument:
  a single random variable $\xi_2$ may be replaced by several independent
  random variables constructed via a suitable resegmentation, while preserving
  the required tail properties.
\end{rem}

\begin{rem}
Recall that we deal with distributions having right-unbounded support, i.e., with distribution functions $F$ such that ${\overline F}(x)>0$ for all $x$. 

We say that two distribution functions, $F$ and $G$, are {\bf tail-equivalent} if
\begin{align}\label{taileq}
\frac{\overline{F}(x)}{\overline{G}(x)} \to 1, \quad \text{as} 
\quad x\to\infty.
\end{align}
We say that a certain property is a {\bf tail property} if, when it holds for a distribution $F$ (or a random variable with distribution 
$F$), it also holds for any distribution 
$G$ that is tail-equivalent to 
$F$ (or for the corresponding random variable).

Clearly, for any random variable $\xi$ with right-unbounded support and any number $x_0$, the distributions of $\xi$ and $\max (\xi, x_0)$ are tail-equivalent. 
Furthermore, heavy-tailedness is a tail property, as is SHT-ness and all other properties we discuss in this paper.

Therefore, we may deal with non-negative random variables only. 
For example, a random variable $\xi_2$ may be chosen to be non-negative (or, more generally, greater almost surely than any predefined constant)  in Theorem 1.
\end{rem}


\section{Discussion}

This section collects several complementary remarks. In particular, we present a truncation counterexample showing that the sufficient conditions \eqref{suf1} and \eqref{cond:supremum_condition} are not necessary, provide an explicit construction illustrating condition \textbf{(C)}, and include further comments on how the class SHT relates to standard subclasses of heavy-tailed distributions.


\subsection{Why an eventual ordering between the two hazard scales is needed}
The requirement that $R_\downarrow(x)\leqslant R_\uparrow(x)$ holds for all sufficiently large $x$
(or some other comparable asymptotic restriction) is not merely cosmetic.
If $R_\downarrow$ eventually dominates $R_\uparrow$,
then $R_\downarrow$-heaviness becomes too weak a constraint: it may already include random variables
that are $R_\uparrow$-light. In that case, condition (i) can hold for a trivial reason, even when $\xi_1$
has no segmented behaviour on the $R_\uparrow$-scale. Therefore,  the equivalence in Theorem~\ref{proposition:main}
cannot be true without an additional relation between the scales.

A concrete example is obtained by taking
\[
  R_\downarrow(x)=(x\vee 0)^2,
  \qquad
  R_\uparrow(x)=x\vee 0.
\]
Let $\xi_1$ have a regularly varying tail, say $\Prob(\xi_1>x)=x^{-\alpha}$ for $x\geqslant 1$, for some given $\alpha>0$, 
and let $\xi_2$ be exponentially distributed with $\Prob(\xi_2>x)=e^{-x}$ for $x\geqslant 0$.
Then, for $x\geqslant 1$,
\[
  R_{\xi_1}(x)=\alpha\log x,
  \qquad
  R_{\xi_2}(x)=x,
\]
and hence both $\xi_1$ and $\xi_2$ are $R_\downarrow$-heavy since
\[
  \frac{R_{\xi_1}(x)}{R_\downarrow(x)}=\frac{\alpha\log x}{x^2}\to 0,
  \qquad
  \frac{R_{\xi_2}(x)}{R_\downarrow(x)}=\frac{x}{x^2}\to 0.
\]
For $\eta=\xi_1\wedge\xi_2$ we have $R_\eta=R_{\xi_1}+R_{\xi_2}$, so
\[
  \frac{R_\eta(x)}{R_\uparrow(x)}
  =
  \frac{\alpha\log x + x}{x}
  \to 1,
\]
and therefore $\eta$ is $R_\uparrow$-light. Thus condition (i) holds.

On the other hand, $\xi_1$ cannot satisfy $(R_\downarrow,R_\uparrow)$-segmentation, because
\[
  \frac{R_{\xi_1}(x)}{R_\uparrow(x)}=\frac{\alpha\log x}{x}\to 0,
\]
so the lower bound $R_{\xi_1}(x)/R_\uparrow(x)\geqslant \gamma$ cannot hold on any sequence
of intervals $[s_i,t_i]$ with $t_i\to\infty$.
This illustrates why some eventual constraint linking $R_\downarrow$ and $R_\uparrow$ is needed.

\subsection{The left limit is essential in the definition of segmentation}\label{leftlimit}

We give an example showing that, in the definition of
$(R_\downarrow,R_\uparrow)$-segmentation, the left limit
$R_\uparrow(s_i-0)$ in \eqref{eq:segmentation_condition} cannot, in general,
be replaced by the right limit $R_\uparrow(s_i)$.
That is, if one defines segmentation via
\[
  \lim_{i\to\infty}\frac{R_\uparrow(s_i)}{R_\downarrow(t_i)}=0
\]
instead of \eqref{eq:segmentation_condition}, then the implication
``(i)$\Rightarrow$(ii)'' in Theorem~\ref{proposition:main} may fail.

\smallskip
Fix a strictly increasing sequence $(A_n)_{n\geqslant1}$ such that $A_n\to\infty$ and
$A_n\geqslant2$, and define $(B_n)_{n\geqslant0}$ by $B_0:=1$ and $B_n:=A_n^2B_{n-1}$.
Let $\mathbb{Z}_+$ be the set of nonnegative integers and $\mathbb{N}$ the set of natural numbers.  
Introduce two piecewise constant 
cumulative hazards by
\[
R_\uparrow(x):=
\begin{cases}
0, & x<0,\\
B_n, & x\in[n,n+1),\ n\in\mathbb{Z}_{+},
\end{cases}\ 
R_\downarrow(x):=
\begin{cases}
0, & x<0,\\
1, & x\in[0,1),\\
A_nB_{n-1}, & x\in[n,n+1),\ n\in\mathbb{N}.
\end{cases}
\]
Both functions are nondecreasing, right-continuous, and diverge to infinity, so
they are cumulative hazards. Moreover, for $x\in[n,n+1)$ with $n\geqslant1$,
\[
\frac{R_\downarrow(x)}{R_\uparrow(x)}
=\frac{A_nB_{n-1}}{B_n}
=\frac1{A_n},
\]
and hence $R_\downarrow(x)\leqslant R_\uparrow(x)$ for all $x\geqslant1$.

\smallskip
Next, define two further cumulative hazards by alternating between ``growing like
$R_\uparrow$'' and ``freezing'':
\[
R_1(x):=
\begin{cases}
0, & x<0,\\
B_{2k}, & x\in[2k,2k+2),\ k\in\mathbb{Z}_{+},
\end{cases}\ 
R_2(x):=
\begin{cases}
0, & x<0,\\
1, & x\in[0,1),\\
B_{2k-1}, & x\in[2k-1,2k+1),\ k\in\mathbb{N}.
\end{cases}
\]
Thus $R_1$ coincides with $R_\uparrow$ on even unit intervals and is frozen on
odd ones, while $R_2$ does the opposite. Let $\xi_1,\xi_2$ be independent random
variables with $R_{\xi_1}=R_1$ and $R_{\xi_2}=R_2$.

\smallskip
We first note that both $\xi_1$ and $\xi_2$ are $R_\downarrow$-heavy. Indeed, on
odd unit intervals $[2k+1,2k+2)$ we have $R_{\xi_1}(x)=B_{2k}$ and
$R_\downarrow(x)=A_{2k+1}B_{2k}$, so
\[
\frac{R_{\xi_1}(x)}{R_\downarrow(x)}=\frac1{A_{2k+1}}\xrightarrow[k\to\infty]{}0.
\]
Similarly, on even unit intervals $[2k,2k+1)$ one has
$R_{\xi_2}(x)/R_\downarrow(x)=1/A_{2k}\to0$.

Now consider $\eta:=\xi_1\wedge\xi_2$. Since $R_\eta=R_{\xi_1}+R_{\xi_2}$, and on
each unit interval $[n,n+1)$ exactly one of $R_{\xi_1},R_{\xi_2}$ equals
$R_\uparrow$ while the other is nonnegative, it follows that $R_\eta(x)\geqslant
R_\uparrow(x)$ for all $x\geqslant1$. Consequently,
\[
\liminf_{x\to\infty}\frac{R_\eta(x)}{R_\uparrow(x)}\geqslant 1,
\]
so $\eta$ is $R_\uparrow$-light. Thus condition~(i) of
Theorem~\ref{proposition:main} holds.

\smallskip
Assume that $s_1<t_1<s_2<\cdots$ is a sequence which diverges to infinity and for which there exists some $\gamma>0$ such that 
$\frac {R_{\xi_1}(x)}{R_\uparrow(x)}\geqslant \gamma$ for all $x \in [s_i,t_i]$, $i=1,2,\cdots$. We have 
\[
\frac{R_{\xi_1}(2k+1)}{R_\uparrow(2k+1)}
=\frac{B_{2k}}{B_{2k+1}}
=\frac1{A_{2k+1}^2}\xrightarrow[k\to\infty]{}0.
\]
Using the fact that all cumulative hazards involved are piecewise constant we see that there exists some $k_0\geqslant 1$ such that
$$
[s_i,t_i] \cap \bigcup_{k=k_0}^\infty[2k-1,2k)=\emptyset \mbox{ for all } i\geqslant 1.
$$
Hence, for sufficiently large $i$, there exists some $k=k(i)$ such that $[s_i,t_i]\subseteq [2k,2k+1)$ and so
\[
\frac{R_\uparrow(s_i)}{R_\downarrow(t_i)}
=\frac{B_{2k}}{A_{2k}B_{2k-1}}
=A_{2k}\xrightarrow[k\to\infty]{}+\infty,
\]
so the segmentation condition without left limit fails and therefore, condition (ii) of Theorem~\ref{proposition:main} fails in this case.

\subsection{Non-necessity of sufficient conditions in Corollaries 1 and 2}\label{ss21}

\begin{prp}
The sufficient condition \eqref{cond:supremum_condition} in Corollary~\ref{Cor2}
is not necessary for $(R_\uparrow,R_\uparrow)$-segmentation to hold. 
\end{prp}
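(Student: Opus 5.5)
We will prove the proposition with an explicit counterexample built by truncation, as the subsection title suggests. Take $R_\uparrow(x)=x\vee 0$, so $(R_\uparrow,R_\uparrow)$-segmentation is just condition \textbf{(C)}. We will construct an $R_\uparrow$-heavy $\xi_1$ satisfying
\[
\limsup_{x\to\infty} R_{\xi_1}(x)/x<\infty .
\]
The idea is to keep $R_{\xi_1}$ equal to the linear function $x$ on long segments, and to freeze it (make it constant) on even longer stretches in between. This gives heaviness while the ratio never exceeds $1$.

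Concretely, pick a rapidly increasing sequence, for instance $s_i=2^{2^{i}}$, $t_i=s_i^2$, so that $s_i/t_i\to0$ and $t_i<s_{i+1}$. We need $R$ to be continuous, nondecreasing and positive, and we also need it to drop to the diagonal again at each $s_{i+1}$. A frozen stretch cannot do that, since a constant function stays above $x$ only until $x$ catches up. So we define $R$ as follows:
\begin{itemize}
\item $R(x)=x$ on $[s_i,t_i]$;
\item $R(x)=t_i$ on $[t_i,u_i]$, where $u_i:=t_i^2$;
\item on $[u_i,s_{i+1}]$, let $R$ increase linearly from $t_i$ to $s_{i+1}$.
\end{itemize}
We choose $s_{i+1}\ge u_i$, for example $s_{i+1}=u_i^2$. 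Near $0$ we set $R(x)=x\vee0$ (shifted so that $R(s_1)>1$), and we let $\xi_1$ have cumulative hazard $R$.

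We then verify the properties in this order.
\begin{enumerate}
\item $R$ is a cumulative hazard. This holds because $R$ is continuous, nondecreasing and tends to $\infty$.
\item $R(x)\le x$ everywhere. On $[u_i,s_{i+1}]$ the graph is a chord between the points $(u_i,t_i)$ and $(s_{i+1},s_{i+1})$, both lying on or below the diagonal, so it stays below the diagonal. Hence $\limsup R(x)/x\le 1<\infty$, and \eqref{cond:supremum_condition} fails.
\item Heaviness: $R(u_i)/u_i=t_i/t_i^2=1/t_i\to0$, so $\liminf R(x)/x=0$ and $\xi_1$ is $R_\uparrow$-heavy.
\item Segmentation with $\gamma=1$: on $[s_i,t_i]$ we have $R(x)/x=1\ge\gamma$, and the segmentation condition reads $R_\uparrow(s_i-0)/R_\uparrow(t_i)=s_i/t_i\to0$, since $R_\uparrow$ is continuous.
\end{enumerate}
Hence condition (ii) of Theorem~\ref{proposition:main} (equivalently \textbf{(C)}) holds, while the sufficient condition \eqref{cond:supremum_condition} does not.

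The only delicate step is the interpolation on $[u_i,s_{i+1}]$: it must keep $R$ monotone and at the same time keep $R$ below the diagonal. The chord argument in step 2 handles both, provided the endpoints are chosen with $t_i\le u_i$ and $u_i\le s_{i+1}$, which our choices guarantee. The same construction, with $R(x)=x$ replaced by any continuous $R_\uparrow$, should also give the general statement.
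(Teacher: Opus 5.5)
Your argument is correct once one slip is repaired. The sample choice $s_i=2^{2^i}$, $t_i=s_i^2$ gives $t_i=2^{2^{i+1}}=s_{i+1}$ and $u_i=t_i^2=t_{i+1}$. So $t_i<s_{i+1}$ fails, and the frozen stretch $[t_i,u_i]$ coincides with the next segment $[s_{i+1},t_{i+1}]$, where you also require $R(x)=x$. Discard it and use only your recursive choice, e.g.\ $s_1>1$, $t_i=s_i^2$, $u_i=t_i^2$, $s_{i+1}=u_i^2$. Steps 1--4 then go through as written, with ``$R(x)\leqslant x$'' understood for $x\geqslant 0$.

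Your route genuinely differs from the paper's. The paper builds nothing explicitly. It takes an arbitrary $R_\uparrow$-heavy $\xi_1$ with $\limsup_{x\to\infty}R_{\xi_1}(x)/R_\uparrow(x)=\infty$, gets a segmentation with some $\gamma$ from Corollary~\ref{Cor2}, and truncates: $R:=R_{\xi_1}\wedge\frac{\gamma}{2}R_\uparrow$.
\begin{itemize}
\item Heaviness survives because $R\leqslant R_{\xi_1}$.
\item On the segments, $R/R_\uparrow\geqslant\gamma/2$.
\item The $\limsup$ is capped at $\gamma/2$.
\end{itemize}
That argument works for every scale $R_\uparrow$ at once, discontinuous ones included. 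It shows structurally that the $\limsup$ condition can always be removed without affecting segmentation, though it tacitly assumes that some $\xi_1$ with $\limsup=\infty$ exists.

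Your example is self-contained and does not use Corollary~\ref{Cor2}. It also directly gives the subsequent corollary about condition \textbf{(C)*}. It is close in spirit to the construction of Subsection~\ref{ss22}, whose slope is at most one and which therefore also has $R(x)\leqslant x$. One counterexample suffices for ``not necessary'' read existentially.

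Your closing claim about general $R_\uparrow$, however, is only a heuristic: the chord step uses linearity of $R_\uparrow(x)=x$. Note that a cumulative hazard need only be right-continuous, not continuous. You could therefore set $R=R_\uparrow(t_i)$ on $[t_i,u_i)$ and jump back to $R=R_\uparrow$ at $u_i$. This removes the interpolation entirely, and it works for any scale once $t_i,u_i$ are chosen by levels of $R_\uparrow$ rather than by squaring.
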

Indeed, consider an $R_\uparrow$-heavy random variable $\xi_1$ such that
\[
  \limsup_{x\to\infty}\frac{R_{\xi_1}(x)}{R_\uparrow(x)}=\infty.
\]
Then, by Corollary~\ref{Cor2}, $(R_\uparrow,R_\uparrow)$-segmentation holds for
some $\gamma>0$. Define a modified cumulative hazard by
\[
  R(x):=R_{\xi_1}(x)\wedge \frac{\gamma}{2}\,R_\uparrow(x),\qquad x \in \mathbb{R}.
\]
The function $R$ is nondecreasing, right-continuous, and diverges to infinity,
hence it is the cumulative hazard of some random variable $\zeta$.
Moreover, $\zeta$ is $R_\uparrow$-heavy since $R(x)\leqslant R_{\xi_1}(x)$ and $\xi_1$ was
$R_\uparrow$-heavy. Finally, on every interval where
$R_{\xi_1}(x)/R_\uparrow(x)\geqslant \gamma$, we also have
$R(x)/R_\uparrow(x)\geqslant \gamma/2$. Therefore $\zeta$ still satisfies
$(R_\uparrow,R_\uparrow)$-segmentation, now with $\gamma/2$ in place of $\gamma$,
while
\[
  \limsup_{x\to\infty}\frac{R(x)}{R_\uparrow(x)}\leqslant \frac{\gamma}{2}<\infty.
\]
Thus \eqref{cond:supremum_condition} is a sufficient but not necessary condition.

\begin{rem}
  On the other hand, it is clear that if $\xi_1$ satisfies the  $(R_\uparrow,R_\uparrow)$-segmentation property for {\bf every} $\gamma>0$, then  \eqref{cond:supremum_condition} holds.
\end{rem}

\begin{cor}
Condition \textbf{(C)} is the $(R_\uparrow,R_\uparrow)$-segmentation condition
from Corollary~\ref{Cor2} in the particular case $R_\uparrow(x)=x\vee 0$ (equivalently,
$R_\downarrow (x) \equiv R_\uparrow(x)=x\vee 0$ in the notation of Theorem~\ref{proposition:main}).
Therefore, the truncation argument above immediately implies that the sufficient condition in Corollary~\ref{cor:limsup_infty} is not necessary for \textbf{(C)}.
\end{cor}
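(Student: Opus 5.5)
The corollary says two things: condition \textbf{(C)} is a special case of the segmentation condition, and the truncation argument of the preceding Proposition carries over to it. I would verify these in that order.

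\emph{Identification.} I would put $R_\downarrow(x)\equiv R_\uparrow(x)=x\vee 0$ in Definition~\ref{cond:segmentation}; the ordering hypothesis $R_\downarrow\leqslant R_\uparrow$ then holds trivially. For $s_i>1$ both hazards are continuous at $s_i$ and $t_i$. This gives $R_\uparrow(s_i-0)=s_i$ and $R_\downarrow(t_i)=t_i$, so \eqref{eq:segmentation_condition} becomes exactly \eqref{eq:C2}.

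The normalisation $R_\downarrow(s_1)>1$ becomes $s_1>1$, which is the requirement $1<s_1$ in the definition of a segmentation of the half-line. Finally, the inequality $R_{\xi_1}(x)/R_\uparrow(x)\geqslant\gamma$ on $[s_i,t_i]$ reads $R_{\xi_1}(x)/x\geqslant\gamma$ there. Hence \textbf{(C)} and $(x\vee0,x\vee0)$-segmentation are literally the same condition, with the same $\gamma$ and the same sequence. The heaviness hypotheses also coincide, by Remark~\ref{Rem2}.

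\emph{Non-necessity.} I would start from any heavy-tailed $\xi_1$ with $\limsup_{x\to\infty} R_{\xi_1}(x)/x=\infty$. Such a variable exists; for example, take $R_{\xi_1}$ piecewise constant with jumps at a rapidly growing sequence, frozen on long stretches so that the $\liminf$ is $0$, with each jump large enough to push $R_{\xi_1}(x)/x$ above a growing bound just after it. By Corollary~\ref{cor:limsup_infty}, $\xi_1$ satisfies \textbf{(C)} with some $\gamma$. Next I would set
\[
R(x)=R_{\xi_1}(x)\wedge \tfrac{\gamma}{2}(x\vee0).
\]
As in the Proposition, this is a cumulative hazard of some $\zeta$. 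It is heavy-tailed because $R\leqslant R_{\xi_1}$. On each $[s_i,t_i]$ we have $R(x)\geqslant \tfrac{\gamma}{2}x$, so \textbf{(C)} holds for $\zeta$ with constant $\gamma/2$ and the same segmentation. At the same time $\limsup_{x\to\infty} R(x)/x\leqslant\gamma/2<\infty$, so \eqref{suf1} fails for $\zeta$.

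No step here is a genuine obstacle. The only points needing care are the continuity of $x\vee0$, which removes any left-limit subtlety of the kind discussed in Subsection~\ref{leftlimit}, and checking that the truncated function is still right-continuous and tends to infinity. Both hold because a minimum of two such functions is again nondecreasing, right-continuous and divergent.
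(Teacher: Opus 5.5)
Your proposal is correct and follows the paper's route. You specialise Definition~\ref{cond:segmentation} to $R_\downarrow\equiv R_\uparrow=x\vee0$; continuity gives $R_\uparrow(s_i-0)=s_i$, and divergence of the sequence follows from $s_i>1$ and $s_i/t_i\to0$. You then rerun the truncation $R_{\xi_1}\wedge\frac{\gamma}{2}(x\vee0)$ from the preceding Proposition, and your explicit example of a heavy-tailed $\xi_1$ with $\limsup_{x\to\infty}R_{\xi_1}(x)/x=\infty$ supplies an existence step the paper leaves implicit.
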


\subsection{Construction of an SHT distribution based on a light-tailed distribution}\label{ss22}
We present a construction of an SHT distribution on the positive half-line by segmenting a light-tailed exponential distribution with the tail $e^{-x}$, for $x\geqslant 0$. A slightly modified construction can be given in the case of the tail $e^{-x^{\beta}}$ with $\beta >1$.

Take any $\gamma \in (0,1)$, any strictly decreasing sequence $\{u_n\}$ tending to zero with $u_1<\gamma$, and any strictly increasing sequence $\{v_n\}$ diverging to infinity with $1<v_1$. 
We recursively construct a strictly increasing to infinity sequence
\begin{align*}
t_0=1 < a_1 < s_1 < t_1 < a_2 < s_2 < t_2 < \ldots < a_k<s_k<t_k < \ldots
\end{align*}
and a function $R(x)$ defined as $R(x)=0$ for $x<0$, 
$R(x)=x$ for $0\leqslant x \leqslant 1$ and,
for any $k\geqslant 1$,  
\begin{align*}
R(x) &= R(t_{k-1})=R(a_k) \quad \text{for} \quad t_{k-1} \leqslant x \leqslant a_k;
\\
R(x) &= R(a_k) + (x-a_k) \quad \text{for} \quad a_k\leqslant x \leqslant t_k.
\end{align*}

Here is the recursive scheme. We let 
\begin{align*} 
a_k=\frac{R(t_{k-1})}{u_k}, \quad
s_k = \frac{a_k-R(a_k)}{1-\gamma} \equiv \frac{a_k(1-u_k)}{1-\gamma}, \quad
t_k = v_k s_k.
\end{align*}
Indeed, the distribution is heavy-tailed since $R(a_k)/a_k \to 0$.
Further, both parts of condition \textbf{(C)} hold.

Having shown that condition \textbf{(C)} is nonempty, we next clarify what it excludes:
several classical heavy-tailed classes cannot satisfy \textbf{(C)},
so they can never produce a light-tailed minimum via an independent heavy-tailed complement.


\subsection{SHT and some common classes of heavy-tailed distributions} \label{ss23}

{\bf Long-tailed distributions are not segmented heavy-tailed.} 
In the theory of heavy-tailed
distributions, a central role is played by the so-called long-tailed distribution.
In particular, all standard classes of heavy-tailed distributions
(regularly varying, lognormal, semi-exponential -- and, more generally, subexponential)
are long-tailed.
Let us recall the corresponding definition.

\begin{defn} 
 A distribution $F$ is {\bf long-tailed} (belongs to the class ${\cal L}$) if it has a right-unbounded support  
    (i.e. $F(x)<1$ for all $x$) and
 \begin{align}\label{long1}
    \lim_{x\to\infty} \frac{\ov{F}(x+1)}{\ov{F}(x)} = 1,
 \end{align}
 or, equivalently,
 \begin{align}\label{long2}
 \lim_{x\to\infty} (R(x+1) - R(x)) =0,
 \end{align} 
 where $R(x) = - \log \ov F(x)$ is the corresponding cumulative hazard. 
\end{defn}

It is known (see e.g. \cite{EKM} or \cite{FKZ}) that \emph{any long-tailed distribution is heavy-tailed}.

\begin{prp}\label{rem:not_long_tailed}
If $\xi_1$ is segmented heavy-tailed, then its distribution cannot be long-tailed.
\end{prp}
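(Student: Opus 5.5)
We argue by contradiction: suppose $\xi_1$ is segmented heavy-tailed and its distribution is long-tailed, and write $R=R_{\xi_1}$. By condition \textbf{(C)} there are $\gamma>0$ and a segmentation $1<s_1<t_1<s_2<\dots$ with $s_i/t_i\to 0$ and $R(x)\geqslant \gamma x$ on every $[s_i,t_i]$. The plan is to show that long-tailedness forces $R$ to grow sublinearly, $R(x)=o(x)$, which is incompatible with $R(t_i)\geqslant \gamma t_i$ for $t_i\to\infty$.

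The key step is the standard Cesàro-type consequence of \eqref{long2}. Given $\varepsilon>0$, choose $x_\varepsilon$ such that $R(y+1)-R(y)\leqslant \varepsilon$ for all $y\geqslant x_\varepsilon$. For $x\geqslant x_\varepsilon$, let $m=\lfloor x-x_\varepsilon\rfloor$ and telescope along $x_\varepsilon, x_\varepsilon+1,\dots,x_\varepsilon+m$. Since $R$ is nondecreasing and $x<x_\varepsilon+m+1$,
\[
R(x)\leqslant R(x_\varepsilon+m+1)\leqslant R(x_\varepsilon)+(m+1)\varepsilon\leqslant R(x_\varepsilon)+(x-x_\varepsilon+1)\varepsilon .
\]
Dividing by $x$ and letting $x\to\infty$ gives $\limsup_{x\to\infty}R(x)/x\leqslant\varepsilon$. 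As $\varepsilon$ is arbitrary, $R(x)/x\to 0$. (This is precisely the mechanism behind the known fact, quoted above, that long-tailed laws are heavy-tailed; here we need the stronger statement that the limit exists and equals zero, not merely the liminf.)

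Finally, take $x=t_i\in[s_i,t_i]$. Condition \textbf{(C)} gives $R(t_i)/t_i\geqslant\gamma>0$ for all $i$, while $t_i\to\infty$ and $R(t_i)/t_i\to 0$, a contradiction. Hence a segmented heavy-tailed distribution cannot be long-tailed.

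There is no real obstacle in this argument. The only point needing care is the telescoping bound, which has to handle non-integer $x$ and use monotonicity of $R$. Note also that the segmentation property $s_i/t_i\to0$ plays no role: condition \textbf{(C)} along any unbounded sequence of points already suffices.
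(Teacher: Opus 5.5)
Your proof is correct and follows essentially the paper's route: the long-tail condition $R(x+1)-R(x)\to 0$, via a Cesàro/telescoping argument, gives $R(x)/x\to 0$, which contradicts $R(x)/x\geqslant\gamma$ on the unbounded segments $[s_i,t_i]$ of condition \textbf{(C)}. The only cosmetic difference is that the paper averages over integers and then passes to real $x$ by monotonicity, while you telescope from $x_\varepsilon$ with an explicit $\varepsilon$-bound that handles non-integer $x$ directly.
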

Indeed, assume that a distribution $F$ with cumulative hazard $R$ is long-tailed. Then it follows from \eqref{long2} that
\[
    \frac{R(n)}{n} = \frac{\sum_{i=1}^n (R(i)-R(i-1))}{n} + \frac{R(0)}n \to 0
\]
as $n\to\infty$ and then, for real $x$, we have $R(x)/x\to 0$ too, by monotonicity of $R(x)$. Then condition \textbf{(C)} fails.

Note that the minimum $\eta = \xi_1 \wedge \xi_2$ of two independent long-tailed random variables must be heavy-tailed  (because it is long-tailed, too). It follows from Proposition
\ref{rem:not_long_tailed} that this fact is more general: 

\begin{cor}\label{Cor3}
The minimum $\xi_1 \wedge \xi_2$ of a long-tailed random variable $\xi_1$ and any independent heavy-tailed random variable $\xi_2$ is necessarily heavy-tailed. 
\end{cor}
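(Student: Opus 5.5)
The plan is to argue by contradiction, combining Theorem~\ref{thm:main_C} with Proposition~\ref{rem:not_long_tailed}. Let $\xi_1$ be long-tailed and $\xi_2$ an independent heavy-tailed random variable, and suppose that $\eta=\xi_1\wedge\xi_2$ were light-tailed. Recall that every long-tailed distribution is heavy-tailed (\cite{EKM}, \cite{FKZ}), so $\xi_1$ is heavy-tailed. Then $\xi_2$ is exactly a witness for condition (i) of Theorem~\ref{thm:main_C}. The implication (i)$\Rightarrow$(ii) of that theorem gives that $\xi_1$ has a segmented heavy tail. This contradicts Proposition~\ref{rem:not_long_tailed}, which says that an SHT distribution cannot be long-tailed. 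Hence $\eta$ must be heavy-tailed.

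The only step with real content is the direction (i)$\Rightarrow$(ii) of Theorem~\ref{thm:main_C}, which we may assume. For the reader's benefit, and as a consistency check, we would also note the direct argument that bypasses the theorem. It uses the computation in the proof of Proposition~\ref{rem:not_long_tailed}: long-tailedness forces $R_{\xi_1}(x)/x\to 0$. Since $\xi_2$ is heavy-tailed, Remark~\ref{Rem1} gives a sequence $x_n\to\infty$ with $R_{\xi_2}(x_n)/x_n\to 0$. Therefore
\[
\frac{R_\eta(x_n)}{x_n}=\frac{R_{\xi_1}(x_n)}{x_n}+\frac{R_{\xi_2}(x_n)}{x_n}\to 0,
\]
so $\liminf_{x\to\infty}R_\eta(x)/x=0$, and $\eta$ is heavy-tailed by Remark~\ref{Rem1}.

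This direct route has no real obstacle. The one point to verify is the passage from integer to real $x$ in $R_{\xi_1}(x)/x\to0$, which follows by monotonicity: $R(x)/x\le R(\lceil x\rceil)/\lceil x\rceil\cdot \lceil x\rceil/x$. I would present the short contradiction via Theorem~\ref{thm:main_C} and Proposition~\ref{rem:not_long_tailed} as the proof, and add the direct computation as a remark showing that only $R_{\xi_1}(x)/x\to 0$ is used. In particular, the conclusion holds for any $\xi_1$ with $\lim_{x\to\infty} R_{\xi_1}(x)/x=0$, not only for long-tailed ones.
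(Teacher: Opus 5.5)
Your proof is correct and is essentially the paper's: the paper deduces the corollary from Proposition~\ref{rem:not_long_tailed} together with the implication (i)$\Rightarrow$(ii) of Theorem~\ref{thm:main_C}, exactly as in your contradiction argument. Your supplementary direct computation is also valid and shows that only $R_{\xi_1}(x)/x\to 0$ is needed; this is the same fact the paper establishes in proving Proposition~\ref{rem:not_long_tailed}.
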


\begin{rem}
A more general class ${\cal OL}$ of {\bf generalised long-tailed} distributions is defined as follows:
\begin{align}
F \in {\cal OL} \ \ \text{if} \ \ \limsup_{x\to\infty}
\frac{\overline{F}(x-1)}{\overline{F}(x)} < \infty.
\end{align}
One can see that the SHT distribution presented in Subsection 2.2 belongs to the class ${\cal OL}$. 
\end{rem}

We now establish an analogous incompatibility for another standard heavy-tailed subclass,
dominated-varying distributions.

\vspace{0.3cm}
{\bf Dominated-varying distributions are not segmented heavy-tailed.}
Another classical subclass of heavy-tailed distributions is formed by the
dominated-varying distributions.

\begin{defn}
A distribution $F$ is {\bf dominated-varying} (belongs to the class ${\cal D}$) if $F(x)<1$ for all $x$ and 
there exists $c>0$ such that
\begin{equation}\label{eq:DV_def_tail}
  \ov F(2x)\geqslant c\,\ov F(x)
  \qquad\text{for all }x\geqslant 1.
\end{equation}
In terms of the cumulative hazard $R(x) = -\log \ov F(x)$, condition
\eqref{eq:DV_def_tail} is equivalent to the existence of $p>0$ such that
\begin{equation}\label{eq:DV_def_haz}
  R(2x)\leqslant R(x)+p
  \qquad\text{for all }x\geqslant 1.
\end{equation}
\end{defn}
It is known (see e.g. \cite{EKM} or \cite{FKZ}) that \emph{any dominated-varying distribution is heavy-tailed}. 

We show that dominated variation is incompatible with condition \textbf{(C)}.

\begin{prp}\label{prp:not_dominated_varying}
If $\xi_1$ is segmented heavy-tailed, then its distribution cannot be dominated-varying.
\end{prp}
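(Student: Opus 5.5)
The plan is to mimic the argument for Proposition~\ref{rem:not_long_tailed}: show that dominated variation forces $R(x)/x\to 0$, which is incompatible with condition \textbf{(C)}. In fact dominated variation gives the much stronger bound $R(x)=O(\log x)$, and from this the contradiction with \textbf{(C)} follows at once.

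Let $F$ be dominated-varying with cumulative hazard $R=R_{\xi_1}$, and fix $p>0$ as in \eqref{eq:DV_def_haz}. Iterating $R(2x)\leqslant R(x)+p$ gives
\[
R(2^n)\leqslant R(1)+np, \qquad n\geqslant 0.
\]
For real $x\geqslant 1$, choose $n$ with $2^{n-1}\leqslant x<2^n$. Since $R$ is nondecreasing,
\[
R(x)\leqslant R(2^n)\leqslant R(1)+p\,(1+\log_2 x).
\]
Hence $R(x)/x\to 0$ as $x\to\infty$. Here $R(1)<\infty$ because the support is right-unbounded, so $\ov F(1)>0$.

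Now suppose $\xi_1$ is also segmented heavy-tailed. Condition \textbf{(C)} provides $\gamma>0$ and points $t_i\to\infty$ with $R(t_i)/t_i\geqslant\gamma$. This contradicts $R(x)/x\to 0$, so $F\notin\mathcal D$.

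No step is a real obstacle. The only care needed is to pass from the dyadic points $2^n$ to arbitrary real $x$ by monotonicity of $R$. Note that neither the segmentation ratio $s_i/t_i\to 0$ nor heavy-tailedness is used: the conclusion already follows from $R(x)/x\to 0$. Alternatively, one could note that $R(x)=O(\log x)$ gives a polynomial lower bound on $\ov F$, and that $\ov F(x)\geqslant c\,x^{-q}$ is incompatible with $\ov F(x)\leqslant e^{-\gamma x}$ along $t_i\to\infty$.
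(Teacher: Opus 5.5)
Your proof is correct and essentially matches the paper's argument: iterate $R(2x)\leqslant R(x)+p$ along dyadic points, extend to real $x$ by monotonicity to get $R(x)/x\to 0$, and note that this contradicts $R(x)/x\geqslant\gamma$ on the segments $[s_i,t_i]$ with $t_i\to\infty$. Stating the bound explicitly as $R(x)=O(\log x)$ is only a slightly sharper phrasing of the same estimate.
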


Indeed, assume that \eqref{eq:DV_def_haz} holds for some $p>0$. Iterating this bound,
we obtain for every $n\in\mathbb{N}$,
\begin{equation}\label{eq:DV_iter}
  R(2^{n+1})
  \leqslant R(2^n)+p
  \leqslant \cdots
  \leqslant R(1)+(n+1)p.
\end{equation}
Now fix $n\in\mathbb{N}$ and take any $x\in[2^n,2^{n+1}]$. By monotonicity of $R$,
\[
  0\leqslant \frac{R(x)}{x}
  \leqslant \frac{R(2^{n+1})}{2^n}
  \leqslant 2^{-n}\bigl(R(1)+(n+1)p\bigr).
\]
The right-hand side converges to $0$
as $n\to\infty$, and therefore $R(x)/x\to 0$ as $x\to\infty$.
Hence no $\gamma>0$ can satisfy
\[
  \frac{R(x)}{x}\geqslant \gamma
  \quad\text{for all }x\in[s_i,t_i]\text{ along any segmentation }(s_i,t_i),
\]
so condition \textbf{(C)} cannot hold.


\subsection{Further comments}
Condition \textbf{(C)} is a special case of Theorem~\ref{proposition:main} with
$R_\downarrow(x)\equiv R_\uparrow(x)=x\vee 0$, where the target corresponds to classical light-tailedness.
The two-scale formulation makes it possible to compare tails relative to a chosen scale,
which is useful when the dichotomy ``heavy vs.\ light'' is too coarse.
Below we present several illustrative examples.

(1) \emph{Two heavy-tail classes.}
Let $R_\downarrow(x)=x^{\beta_1}$ and $R_\uparrow(x)=x^{\beta_2}$ with
$0<\beta_1<\beta_2<1$. Both correspond to classically heavy tails.
However, 
$(R_\downarrow,R_\uparrow)$-segmentation characterises when an $R_\downarrow$-heavy random variable  
$\xi_1$ admits an independent $R_\downarrow$-heavy $\xi_2$ random variable such that
$\xi_1\wedge\xi_2$ is $R_\uparrow$-light, that is, has a tail at least on the scale
$\exp(-x^{\beta_2})$. Condition \textbf{(C)} is not designed for such within-heavy
comparisons.

(2) \emph{Two light-tail classes.}
To distinguish ordinary light tails from Gaussian-type tails, one may take
$R_\uparrow(x)=x^2$. Then distributions with $R_\xi(x)=o(x^2)$ (e.g.\ exponential
tails) are $R_\uparrow$-heavy, whereas $R_\uparrow$-lightness corresponds to tails of order 
$\exp(-c x^2)$. Theorem~\ref{proposition:main} provides a criterion for when
a complement $\xi_2$ can make $\xi_1\wedge\xi_2$ $R_\uparrow$-light. Condition \textbf{(C)}
does not apply here, since $\xi_1$ need not be heavy-tailed in the classical sense.

(3) \emph{A pronounced gap between input and target scales.}
One can impose a substantial separation of scales, for example $R_\downarrow(x)=\log(1+x)$ and
$R_\uparrow(x)=x^2$, in order to study when a very heavy input can produce a very light
minimum relative to the chosen scale. Such a comparison cannot be expressed via condition 
\textbf{(C)}, which corresponds to the intermediate scale $R_\uparrow(x)=x$.

\section{Proofs}\label{sec:proofs}

\begin{proof}[Proof of Theorem~\ref{thm:main_C}]
First assume that $\xi_1$ satisfies condition \textbf{(C)} (with given $\gamma>0$ and
$(s_i,t_i)_{i \geqslant 1}$). Let $\xi_2$ be independent of $\xi_1$ with cumulative hazard function $R_{\xi_2}(x)=0$ for $x < s_1$,  $R_{\xi_2}(x)=\gamma x$ for $x \in [t_i,s_{i+1})$
and $R_{\xi_2}(x)=\gamma s_i$ for $x \in [s_i,t_i)$. Then, for $x \in [t_i,s_{i+1})$, we have
\begin{align*}
\frac {R_{\xi_1}(x)}x+ \frac {R_{\xi_2}(x)}x \geqslant  \frac {R_{\xi_2}(x)}x =\gamma,
\end{align*}
and, for $x \in [s_i,t_i)$, 
\begin{align*}
\frac {R_{\xi_1}(x)}x+ \frac {R_{\xi_2}(x)}x \geqslant  \frac {R_{\xi_1}(x)}x \geqslant \gamma,
\end{align*}
so  $\frac{R_\eta(x)}x \geqslant \gamma$ for all $x \geqslant s_1$ and therefore $\eta$ is light. Further, $\xi_2$ is heavy since
\begin{align*}
\frac{R_{\xi_2}(t_i^-)}{t_i}=\frac{\gamma s_i}{t_i}\to 0 \mbox { as } i \to \infty.
\end{align*}

Conversely, assume that  $\xi_2$ is a heavy-tailed random variable which is independent of $\xi_1$, and that $\eta:=\xi_1\wedge \xi_2$ is light tailed. Then there exists some $\gamma>0$ such that
\begin{align*}
\gamma \leqslant \liminf_{x \to \infty} \frac {R_{\eta}(x)}x =   \liminf_{x \to \infty}\big(\frac {R_{\xi_1}(x)}x+ \frac {R_{\xi_2}(x)}x\big).
\end{align*} 
Let $u_0 \geqslant 1$ be such that
$\big(\frac {R_{\xi_1}(x)}x+ \frac {R_{\xi_2}(x)}x\big)\geqslant \frac \gamma 2$ for all $x \geqslant u_0$. Since $\xi_2$ is heavy-tailed, there exists a sequence $u_0<u_1<\cdots$ diverging to $\infty$ such that $\frac{R_{\xi_2}(u_i)}{u_i}\leqslant \gamma \ee^{-i}$ for all $i \geqslant 1$. 

Let $v_i:=u_i\ee^{-i/2}$. Then, for $v \in [v_i,u_i]$, we have
\begin{align*}
\frac{R_{\xi_2}(v)}{v} \leqslant 
\frac{R_{\xi_2}(u_i)}{v_i} \leqslant 
 \frac{\gamma \ee^{-i}u_i}{v_i}=\gamma \ee^{-i/2},
\end{align*} 
and therefore
\begin{align*}
\frac{R_{\xi_1}(v)}{v}\geqslant 
\frac \gamma 2-\gamma   \ee^{-i/2}\geqslant \frac{\gamma}{10}
\end{align*}
for all $i\geqslant 2$. This shows that \textbf{(C)} holds.
\end{proof}

\begin{proof}[Proof of Corollary~\ref{cor:limsup_infty}]
We show that, under the assumption of the corollary, condition \textbf{(C)} holds even for arbitrarily chosen $\gamma>0$. Define, recursively, $t_0:=1$ and, for $i=1,2,\ldots$, 
\begin{align*}
s_i:=\inf\{t \geqslant t_{i-1}+1:\,\frac {R_{\xi_1}(t)} t \geqslant (i+1)\gamma\}
\quad \text{and} \quad 
t_i:=(i+1)s_i.
 \end{align*}
  Clearly, $1<s_1<t_1<s_2<t_2 ... $ and  $\lim_{i \to \infty} s_i/t_i=0$. Furthermore, for $x \in [s_i,t_i]$, we have 
 \begin{align*}
\frac {R_{\xi_1}(x)}x \geqslant 
\frac {R_{\xi_1}(s_i)}{t_i}
\geqslant 
\frac{(i+1)\gamma s_i}{(i+1)s_i}=\gamma, 
\end{align*}
so condition \textbf{(C)} holds.
\end{proof}

\begin{proof}[Proof of Theorem \ref{proposition:main}]
First, assume that $\xi_1$ satisfies $(R_\downarrow,R_\uparrow)$-segmentation (with given $\gamma>0$ and $[s_i,t_i]_{i\geqslant 1}$). Define $\xi_2$, independent of $\xi_1$, with cumulative hazard $R_{\xi_2}(x)$ as follows:

\begin{enumerate}
\item On segments where the behaviour of $\xi_1$ is unknown, let $R_{\xi_2}$ grow fast enough to ensure a light tail by itself: $R_{\xi_2}(x) = \gamma R_\uparrow(x)$ for $x \in [t_i,s_{i+1})$.
\item On segments where $\xi_1$ already grows sufficiently quickly, keep $R_{\xi_2}$ as heavy as possible by freezing it: $R_{\xi_2}(x) = \gamma R_\uparrow(s_i - 0)$ for $x \in [s_i,t_i)$.
\item At the beginning, set $R_{\xi_2}(x) = 0$ for $x \in (-\infty,s_1)$.
\end{enumerate}

This construction ensures that $R_\eta(x)/R_\uparrow(x)\geqslant \gamma$ for all
$x\geqslant s_1$. Furthermore, $\xi_2$ is $R_\downarrow$-heavy because
\begin{align*}
\frac{R_{\xi_2}(t_i - 0)}{R_\downarrow(t_i)}
=\frac{\gamma\,R_\uparrow(s_i - 0)}{R_\downarrow(t_i)}
\underset{i\to\infty}{\longrightarrow} 0.
\end{align*}

Conversely, assume that $\xi_2$ is an $R_\downarrow$-heavy random
variable, independent of $\xi_1$, and that $\eta=\xi_1\wedge\xi_2$ is
$R_\uparrow$-light. Then there exists $\gamma>0$ such that
\begin{align*}
\gamma \leqslant \liminf_{x\to\infty}\frac{R_\eta(x)}{R_\uparrow(x)}
= \liminf_{x\to\infty}
\left(\frac{R_{\xi_1}(x)}{R_\uparrow(x)}+\frac{R_{\xi_2}(x)}{R_\uparrow(x)}\right).
\end{align*}
Hence there is $u_0$ 
with
\begin{align}
\label{eq:upper_est}
\frac{R_{\xi_1}(x)}{R_\uparrow(x)}+\frac{R_{\xi_2}(x)}{R_\uparrow(x)}
\geqslant \frac{\gamma}{2}
\end{align}
for all $x \geqslant u_0$.
Increasing $u_0$ if necessary, we may also assume that $R_\downarrow(x)\leqslant R_\uparrow(x)$
holds for all $x\geqslant u_0$ and that $R_\downarrow(u_0)>1$.
We now construct the sequence
$s_1<t_1<s_2<t_2<\ldots$ required by $(R_\downarrow,R_\uparrow)$-segmentation.

\emph{Partition the tail $[u_0; +\infty)$ into segments.}
Fix any strictly decreasing sequence $\{\alpha_n\}_{n=1}^\infty$ with
$\alpha_n\downarrow 0$ and $\alpha_1=1/2$. Since $\xi_2$ is
$R_\downarrow$-heavy, there exists a sequence $u_0<u_1<u_2<\ldots$ diverging to
infinity such that
\begin{align}
\label{eq:xi_2_estimate}
R_{\xi_2}(u_i) < \frac{\gamma}{2}\alpha_i\,R_\downarrow(u_i)
\leqslant \frac{\gamma}{4} R_\uparrow(u_i).
\end{align}
This provides the segments $\{[u_i,u_{i+1}]\}_{i=0}^\infty$. Combining
\eqref{eq:xi_2_estimate} with \eqref{eq:upper_est} yields
\begin{align}
\label{eq:est_xi1_lower}
R_{\xi_1}(u_i) > \frac{\gamma}{4} R_\uparrow(u_i).
\end{align}
We still need to identify segments that are long enough to satisfy
\eqref{eq:segmentation_condition}.

\emph{Identify candidate segments.}
We now focus on segments where $R_{\xi_1}$ ``stagnates''. Since the collection
$\{[u_i,u_{i+1}]\}_{i=0}^\infty$ covers $[u_0,+\infty)$ and $\xi_1$ is
$R_\downarrow$-heavy, there are infinitely many indices for which there exists
$u_i^*\in(u_i,u_{i+1})$ with
\begin{align}
\label{eq:est_xi1_upper}
\frac{R_{\xi_1}(u_i^*)}{R_\uparrow(u_i^*)}
  \leqslant \frac{R_{\xi_1}(u_i^*)}{R_\downarrow(u_i^*)} < \frac{\gamma}{4}.
\end{align}
From now on, we consider only such segments and denote them by
$\{[u_i^{(l)},u_i^{(r)}]\}_{i=1}^\infty$.

\emph{Constructing $[s_i,t_i]$.}
By \eqref{eq:est_xi1_lower} and \eqref{eq:est_xi1_upper}, there exists a
rightmost time at which the ratio is still below
the threshold, namely
\begin{align*}
s_i := \sup\left\{
  s < u_i^{(r)} : \frac{R_{\xi_1}(s)}{R_\uparrow(s)} < \frac{\gamma}{4}
\right\},
\end{align*}
which we take as the left endpoint. Set $t_i:=u_i^{(r)}$.
By construction, 
and by the right-continuity of both
$R_{\xi_1}$ and $R_\uparrow$, the ratio $R_{\xi_1}/R_\uparrow$ is
right-continuous at $s_i$, and therefore
\begin{align*}
\frac{R_{\xi_1}(x)}{R_\uparrow(x)} \geqslant \frac{\gamma}{4}
\quad\text{for all }x\in[s_i;t_i),\ i\geqslant 1.
\end{align*}

It remains to verify that the segments are long enough.
By definition of $s_i$,
\[
    \frac{R_{\xi_1}(s_i - 0)}{R_\uparrow(s_i - 0)} \leqslant
    \frac{\gamma}{4}.
\]
Combining this with
\eqref{eq:upper_est} gives
$R_{\xi_2}(s_i - 0) / R_\uparrow(s_i - 0) \geqslant \frac{\gamma}{4}$, hence
\begin{align*}
R_\uparrow(s_i - 0)\leqslant \frac{4}{\gamma}\,R_{\xi_2}(s_i - 0).
\end{align*}
Using this and \eqref{eq:xi_2_estimate} we obtain
\begin{align*}
\frac{R_\uparrow(s_i - 0)}{R_\downarrow(t_i)}
< \frac{4}{\gamma}\,\frac{R_{\xi_2}(s_i - 0)}{R_\downarrow(t_i)}
\leqslant \frac{4}{\gamma}\,\frac{R_{\xi_2}(t_i)}{R_\downarrow(t_i)}
< 2\alpha_{i+1}\,\frac{R_\downarrow(t_i)}{R_\downarrow(t_i)}
= 2\alpha_{i+1}\underset{i\to\infty}{\longrightarrow} 0,
\end{align*}
since $\{t_i\}_{i=0}^\infty$ is a subsequence of $\{u_i\}_{i=0}^\infty$.
This proves \eqref{eq:segmentation_condition} and completes the proof.
\end{proof}

\begin{proof}[Proof of Corollary \ref{Cor2}]
We construct the sequence of segments $\{[s_i,t_i)\}_{i=1}^\infty$ recursively.
Choose any $t_0$ such that $R_\uparrow(t_0)>0$, and fix a strictly increasing sequence
$\{A_i\}_{i=1}^\infty$ such that $A_1>1$ and $A_i\to\infty$.

\emph{Choice of $s_i$.}
Given $t_{i-1}$, define
\begin{align*}
  s_i := \inf\Bigl\{
    s>t_{i-1}:\ \frac{R_{\xi_1}(s)}{R_\uparrow(s)} \geqslant A_i\gamma
  \Bigr\}.
\end{align*}
The existence of $s_i$ follows from \eqref{cond:supremum_condition}.
Since both $R_{\xi_1}$ and $R_\uparrow$ are right-continuous and
$R_\uparrow(s_i)>0$, the ratio $R_{\xi_1}/R_\uparrow$ is right-continuous at
$s_i$, and therefore
\begin{align*}
  R_{\xi_1}(s_i)\geqslant A_i\gamma R_\uparrow(s_i).
\end{align*}

\emph{Choice of $t_i$.}
Define $t_i$ as the first time when $R_\uparrow$ reaches the level
$A_iR_\uparrow(s_i)$:
\begin{align*}
  t_i := \inf\Bigl\{
    t>s_i:\ R_\uparrow(t)\geqslant A_i R_\uparrow(s_i)
  \Bigr\}.
\end{align*}
Such a point exists because $R_\uparrow$ is nondecreasing, right-continuous,
and unbounded. By the definition of $t_i$,
\begin{align*}
  R_\uparrow(x)<A_iR_\uparrow(s_i)
  \qquad\text{for all }x\in[s_i,t_i).
\end{align*}

\emph{Verification of $(R_\uparrow,R_\uparrow)$-segmentation.}
First, the segments are long enough:
since $R_\uparrow(t_i)\geqslant A_iR_\uparrow(s_i)$,
\begin{align*}
  \frac{R_\uparrow(s_i)}{R_\uparrow(t_i)}
  \leqslant \frac{1}{A_i}
  \underset{i\to\infty}{\longrightarrow}0.
\end{align*}
Second, for every $x\in[s_i,t_i)$,
\begin{align*}
  \frac{R_{\xi_1}(x)}{R_\uparrow(x)}
  \geqslant 
  \frac{R_{\xi_1}(s_i)}{R_\uparrow(x)}
  \geqslant 
  \frac{A_i\gamma R_\uparrow(s_i)}{A_iR_\uparrow(s_i)}
  = \gamma,
\end{align*}
where we used the monotonicity of $R_{\xi_1}$ and the bound on $R_\uparrow(x)$.
Thus, $\xi_1$ satisfies $(R_\uparrow,R_\uparrow)$-segmentation for the given
$\gamma>0$.
\end{proof}

\begin{proof}[Proof of Theorem~\ref{thm:k_out_of_n_segmentation}]
\emph{(i)$\Rightarrow$(ii).}
Fix the index set $K:=\{1,2,\dots,k\}$. By the assumptions of the theorem,
the $k$-fold minimum
\[
\xi_1\wedge\cdots\wedge\xi_k
\]
is $R_\uparrow$-light. Set
\[
\zeta := \xi_2\wedge\cdots\wedge\xi_k.
\]
Then $\zeta$ is independent of $\xi_1$ and is $R_\downarrow$-heavy.
Moreover,
\[
\xi_1\wedge\zeta=\xi_1\wedge\cdots\wedge\xi_k
\]
is $R_\uparrow$-light. Hence, the pair $(\xi_1,\zeta)$ satisfies the hypotheses
of Theorem~\ref{proposition:main}, and (ii) follows.

\emph{(ii)$\Rightarrow$(i).}
Assume that $\xi_1$ is $R_\downarrow$-heavy and satisfies the
$(R_\downarrow,R_\uparrow)$-segmentation condition. Fix $\gamma>0$ and an
$a_0\geqslant 0$ such that, by Theorem~\ref{proposition:main}, there exists an
$R_\downarrow$-heavy random variable $\xi'$, independent of $\xi_1$, with
\begin{equation}\label{eq:konn_light_pair_haz}
  R_{\xi_1 \wedge \xi'}(x)=R_1(x)+R'(x)\geqslant \gamma R_\uparrow(x),
  \qquad x\geqslant a_0,
\end{equation}
where  $R_1 := R_{\xi_1}$ and $R' := R_{\xi'}$. 

This is the only point in the argument where the segmentation condition
is used directly. The subsequent construction of
$\xi_2,\ldots,\xi_n$ is based on $\xi_1$ and $\xi'$ and follows ideas
similar to those in \cite{FKT}. The main difficulty here is that
$\xi_1$ itself cannot be constructed explicitly; for this reason, the
existence of the auxiliary variable $\xi'$ is essential.

We describe the random
variables $\xi_1,\ldots,\xi_n$ through their cumulative hazards
$R_1,\ldots,R_n$, working directly with risk functions throughout.
We partition $\mathbb{R}$ into successive intervals
$\{(a_i,a_{i+1}]\}_{i=0}^\infty$ and prescribe the cumulative hazards
$R_2,\ldots,R_n$ interval by interval.
On each interval, we keep the hazards corresponding to a selected set of
indices fixed, while allowing the remaining hazards to increase at a
controlled rate relative to $R'$. This guarantees that, on every
interval, at least one hazard grows sufficiently fast.

By cycling through the relevant choices of indices across intervals, we
obtain independent random variables $\xi_2,\ldots,\xi_n$ (independent of
$\xi_1$) with the desired tail behavior of their minima.

Let $I_1,\ldots,I_M$ denote all $(k-1)$-subsets of $\{1,\ldots,n\}$, where
$M=\binom{n}{k-1}$. We arrange these subsets into an infinite periodic
sequence $\{I_l\}_{l\ge1}$ by repeating them cyclically; that is,
$I_i=I_j$ whenever $i\equiv j \pmod M$.

We construct an increasing sequence of thresholds
$a_0<a_1<a_2<\cdots$ and define the cumulative hazards
$R_2,\ldots,R_n$ inductively on each interval $(a_{l-1},a_l]$.
On the $l$-th interval, the construction is arranged so that:

\begin{itemize}
\item The indices in $I_l$ are \emph{slow} on $(a_{l-1},a_l]$,
meaning that their hazards do not grow at the $R_\downarrow$ rate.

\item The indices outside $I_l$ are \emph{fast} on
$(a_{l-1},a_l]$, in the sense that their hazards dominate
$\gamma R_\uparrow$ pointwise.

\item If $1\notin I_l$, then we additionally enforce that, on
$(a_{l-1},a_l]$, the $(k-1)$-fold minimum over the indices in
$I_l$ matches the hazard behaviour of $\xi'$. Consequently, even
though $\xi_1$ is the only index outside $I_l$, adding it renders the
$k$-fold minimum $R_\uparrow$-light.
\end{itemize}

Fix $l\geqslant 1$ and suppose that $R_2,\ldots,R_n$ have already been defined on
$[0,a_{l-1}]$. On the next block $(a_{l-1},a_l]$, we extend these hazards
according to the scheduled set $I_l$. We then choose $a_l$
large enough so that the required heaviness estimate holds at the endpoint.

We start by defining the fast indices.

\emph{Fast indices.}
For every index $i\in\{2,\ldots,n\}\setminus I_l$ and every
$x\in(a_{l-1},a_l]$, define
\begin{equation}\label{eq:konn_def_fast}
  R_i(x)
  :=
  \max\{R_i(a_{l-1}),\gamma R_\uparrow(a_{l-1})\}
  +\gamma\bigl(R_\uparrow(x)-R_\uparrow(a_{l-1})\bigr).
\end{equation}
This ensures that, for all such $i$ and all $x\in(a_{l-1},a_l]$,
\begin{equation}\label{eq:konn_fast_pointwise}
  R_i(x)\geqslant \gamma R_\uparrow(x).
\end{equation}

\smallskip
\emph{Slow indices when $1\in I_l$.}
For every $i\in I_l\setminus\{1\}$ and $x\in(a_{l-1},a_l]$, set
\begin{equation}\label{eq:konn_def_slow_freeze}
  R_i(x):=R_i(a_{l-1}).
\end{equation}
Thus, on this interval, the only hazard within the scheduled slow set
that may increase is the fixed hazard $R_1$.

\smallskip
\emph{Slow indices when $1\notin I_l$ (forcing the proxy $\xi'$).}
In this case, $I_l\subset\{2,\ldots,n\}$ has cardinality $k-1$, and the
critical $k$-tuple is $I_l\cup\{1\}$. Since $\xi_1$ is the only index
outside $I_l$, we must enforce $R_\uparrow$-lightness of the
$k$-fold minimum without relying on any explicit growth properties of $R_1$ on
this interval. We achieve this by forcing the minimum over $I_l$ to
track the proxy $\xi'$ at the hazard level.

At the left endpoint $a_{l-1}$, where jumps are allowed, we first
increase the hazards of the indices in $I_l$, if necessary, so that
their sum reaches $R'(a_{l-1})$. Specifically, choose numbers
$(\Delta_i)_{i\in I_l}$ such that
\begin{equation}\label{eq:konn_catchup}
  \sum_{i\in I_l}\Delta_i
  =
  \bigl(R'(a_{l-1})-\sum_{i\in I_l}R_i(a_{l-1})\bigr)^+,
\end{equation}
and redefine $R_i(a_{l-1})\gets R_i(a_{l-1})+\Delta_i$ for
$i\in I_l$. Then, for $x\in(a_{l-1},a_l]$ and each $i\in I_l$,
set
\begin{equation}\label{eq:konn_def_proxy_split}
  R_i(x)
  :=
  R_i(a_{l-1})
  +\frac{R'(x)-R'(a_{l-1})}{k-1}.
\end{equation}
Consequently, for all $x\in(a_{l-1},a_l]$,
\begin{equation}\label{eq:konn_sum_matches_proxy}
  \sum_{i\in I_l}R_i(x)
  =
  \sum_{i\in I_l}R_i(a_{l-1})
  +\bigl(R'(x)-R'(a_{l-1})\bigr)
  \geqslant R'(x),
\end{equation}
where the inequality follows from \eqref{eq:konn_catchup}.

\smallskip
\emph{Choice of the endpoint $a_l$.}
Finally, we choose $a_l$ sufficiently large so that the scheduled
$(k-1)$-fold minimum is small enough to produce $R_\downarrow$-heaviness at $x=a_l$.
Fix an arbitrary deterministic sequence $\varepsilon_l \downarrow 0$.
The argument depends on whether $1\in I_l$.

\begin{itemize}
\item If $1\in I_l$, then by \eqref{eq:konn_def_slow_freeze},
\[
  \sum_{i\in I_l}R_i(a_l)
  = R_1(a_l)+\sum_{i\in I_l\setminus\{1\}}R_i(a_{l-1}).
\]
Since $\xi_1$ is $R_\downarrow$-heavy, we may choose $a_l>a_{l-1}$
sufficiently large such that
\begin{equation}\label{eq:konn_choose_endpoint_contains1}
  \frac{R_1(a_l)}{R_\downarrow(a_l)}\leqslant \varepsilon_l,
  \qquad
  \sum_{i\in I_l\setminus\{1\}}R_i(a_{l-1})
  \leqslant \varepsilon_l\,R_\downarrow(a_l).
\end{equation}

\item If $1\notin I_l$, then by \eqref{eq:konn_sum_matches_proxy},
\[
  \sum_{i\in I_l}R_i(a_l)
  \leqslant
  \sum_{i\in I_l}R_i(a_{l-1}) + R'(a_l).
\]
Since $\xi'$ is $R_\downarrow$-heavy, we can again choose
$a_l>a_{l-1}$ sufficiently large such that
\begin{equation}\label{eq:konn_choose_endpoint_no1}
  \frac{R'(a_l)}{R_\downarrow(a_l)}\leqslant \varepsilon_l,
  \qquad
  \sum_{i\in I_l}R_i(a_{l-1})
  \leqslant \varepsilon_l\,R_\downarrow(a_l).
\end{equation}
\end{itemize}

In both cases, such a choice is possible since
$R_\downarrow(x)\to\infty$ and all other quantities involved are already
fixed constants.

This completes the inductive construction of the hazards
$R_2,\ldots,R_n$ on $\mathbb{R}^+$. Finally, we define independent random
variables $\xi_2,\ldots,\xi_n$, also independent of $\xi_1$, by
\[
  \Prob(\xi_i>x)=\exp(-R_i(x)).
\]

\emph{$k$-fold minima are $R_\uparrow$-light.}
Fix a set $K\subset\{1,\ldots,n\}$ with $|K|=k$ and take $x\geqslant a_0$.
Let $l$ be such that $x\in(a_{l-1},a_l]$.
Since $|I_l|=k-1$, the set $K$ cannot be contained in $I_l$; hence there
exists an index $j\in K\setminus I_l$.

If $j\ne 1$, then $j\in\{2,\ldots,n\}\setminus I_l$ and
\eqref{eq:konn_fast_pointwise} yields
$R_j(x)\geqslant \gamma R_\uparrow(x)$. Therefore,
\[
  R_{\wedge_{i\in K}\{\xi_i\}}(x)
  =\sum_{i\in K}R_i(x)
  \geqslant \gamma R_\uparrow(x),
\]
so $\wedge_{i\in K}\{\xi_i\}$ is $R_\uparrow$-light.

If $j=1$, then necessarily $K=I_l\cup\{1\}$, and in particular $1\notin I_l$.
In this case,
\[
  R_{\wedge_{i\in K}\{\xi_i\}}(x)
  = R_1(x)+\sum_{i\in I_l}R_i(x)
  \geqslant R_1(x)+R'(x)
  \geqslant \gamma R_\uparrow(x),
\]
where we used \eqref{eq:konn_sum_matches_proxy}.
Thus, every $k$-fold minimum is $R_\uparrow$-light.

\emph{$(k-1)$-fold minima are $R_\downarrow$-heavy.}
Fix $J\subset\{1,\ldots,n\}$ with $|J|=k-1$.
Because the sequence $\{I_l\}$ is periodic, there are infinitely many indices
$l$ such that $I_l=J$. For those $l$, we estimate
$\sum_{i\in J}R_i(a_l)$ using the way $a_l$ was chosen.

If $1\in J$, then \eqref{eq:konn_choose_endpoint_contains1} gives
\[
  \frac{\sum_{i\in J}R_i(a_l)}{R_\downarrow(a_l)}
  \leqslant
  \frac{R_1(a_l)}{R_\downarrow(a_l)}
  +\frac{\sum_{i\in J\setminus\{1\}}R_i(a_{l-1})}{R_\downarrow(a_l)}
  \leqslant 2\varepsilon_l \xrightarrow[l\to\infty]{} 0
\]
along an infinite subsequence.

If $1\notin J$, then \eqref{eq:konn_choose_endpoint_no1} yields
\[
  \frac{\sum_{i\in J}R_i(a_l)}{R_\downarrow(a_l)}
  \leqslant
  \frac{\sum_{i\in J}R_i(a_{l-1})}{R_\downarrow(a_l)}
  +\frac{R'(a_l)}{R_\downarrow(a_l)}
  \leqslant 2\varepsilon_l \xrightarrow[l\to\infty]{} 0
\]
along an infinite subsequence.

In either case,
\[
  \liminf_{x\to\infty}\frac{R_{\wedge_{i\in J}\{\xi_i\}}(x)}{R_\downarrow(x)}=0,
\]
so $\wedge_{i\in J}\{\xi_i\}$ is $R_\downarrow$-heavy. This proves (i) and completes
the proof.
\end{proof}

\end{document}